\newtheorem{theo}{Theorem}[section]
\newtheorem{remark}{Remark}[section]
\title{Virtual Element Method Applied to Two Dimensional Axisymmetric Elastic Problems}
\author{
 Paulo Akira F. Enabe \\
    Escola Politénica\\
    University of São Paulo\\
    Department of Structural and Geotechnical Engineering\\
  \texttt{paulo.enabe@usp.br} \\
   \And
 Rodrigo Provasi \\
    Escola Politénica\\
    University of São Paulo\\
    Department of Structural and Geotechnical Engineering\\
  \texttt{provasi@usp.br} \\
}
\begin{document}
\maketitle
\begin{abstract}
This work presents a Virtual Element Method (VEM) formulation tailored for two-dimensional axisymmetric problems in linear elasticity. By exploiting the rotational symmetry of the geometry and loading conditions, the problem is reduced to a meridional cross-section, where all fields depend only on the radial and axial coordinates. The method incorporates the radial weight \( r \) in both the weak formulation and the interpolation estimates to remain consistent with the physical volume measure of cylindrical coordinates. A projection operator onto constant strain fields is constructed via boundary integrals, and a volumetric correction term is introduced to account for the divergence of the stress field arising from axisymmetry. The stabilization term is designed to act only on the kernel of the projection and is implemented using a boundary-based formulation that guarantees stability without affecting polynomial consistency. Furthermore, an a priori interpolation error estimate is established in a weighted Sobolev space, showing optimal convergence rates. The implementation is validated through patch tests that demonstrate the accuracy, consistency, and robustness of the proposed approach.
\end{abstract}


\section{Introduction}

\paragraph{}The objective of this work is to present a formulation of the two-dimensional axisymmetric elastic case using the Virtual Element Method (VEM). This formulation addresses the challenges inherent to axisymmetric problems in cylindrical coordinates, where the dependence on the radial coordinate introduces weighted integrals and nontrivial coupling between components of the stress and strain fields. The proposed method builds upon the classical VEM framework by adapting the projection operators, stabilization strategies, and load terms to the axisymmetric context. In particular, special attention is given to the development of consistent and computable projection operators, the construction of a boundary-based stabilization term, and the integration of volumetric correction to preserve physical fidelity. A priori error estimates are derived in weighted Sobolev spaces to rigorously quantify the interpolation error. The resulting formulation is general, robust, and well-suited for polygonal meshes, making it an effective approach for solving linear elastic axisymmetric problems with complex geometries.

\paragraph{}The Virtual Element Method is a Galerkin-type discretization technique that extends the Finite Element Method (FEM) to arbitrary polygonal and polyhedral meshes while maintaining consistency and convergence properties. Introduced in \cite{beirao2013vem} and \cite{veiga2014hitchhiker}, VEM circumvents the explicit construction of shape functions within elements by exploiting variational principles and projection operators onto polynomial subspaces. This allows the method to handle general element geometries, including non-convex and non-star-shaped polygons, with a high degree of geometric flexibility. The key idea is to define local discrete spaces whose functions are known only implicitly but whose projections onto polynomial spaces are explicitly computable from a set of carefully designed degrees of freedom. The bilinear form is then approximated by combining a consistent term, based on polynomial projections, with a stabilization term that controls the non-polynomial components of the virtual functions. This structure ensures that VEM satisfies the patch test, achieves optimal convergence rates, and remains robust on meshes that challenge conventional finite element formulations.

\paragraph{}
The Virtual Element Method (VEM) has been extensively employed to address both elastic and inelastic problems in solid mechanics. Foundational contributions include the application of VEM to linear elasticity, as presented in \cite{beirao2015elastic}, and its generalization to higher-order polynomial approximations in polygonal meshes, as discussed in \cite{artioli2017arbitrary}. Further developments have extended the method to curvilinear coordinates \cite{artioli2020curvilinear} and to general nonlinear frameworks \cite{wriggers2020general}. In the context of finite deformations, VEM has been formulated to capture plasticity and large strain behavior. There are early works including \cite{hudobivnik2019plasticity}, and more advanced formulations found in \cite{chi2017vem}, \cite{wriggers2017efficient}, \cite{vanhuyssteen2020isotropic}, and \cite{vanhuyssteen2021transverse}, where the method demonstrates robust performance for isotropic and transversely isotropic hyperelastic materials. The three-dimensional generalization of VEM is explored in \cite{cihan2022contact}, focusing on contact problems, and in \cite{xu2024highorder}, which addresses high-order virtual elements for nonlinear elasticity. More recently, a new line of research has focused on removing the need for explicit stabilization terms. Stabilization-free formulations for elasticity and elastoplasticity problems are proposed in \cite{chen2023stabilization} and \cite{chen2023serendipity}, while similar advances in both two- and three-dimensional nonlinear problems are presented in \cite{xu2024elastoplastic} and \cite{xu2024hyperelastic}, respectively. These recent works signal a growing trend towards simplification and improved numerical stability within the VEM framework.

\paragraph{}
The axisymmetric case arises in elasticity when the geometry, boundary conditions, and loading are invariant with respect to rotations about a fixed axis. This allows the original three-dimensional problem in cylindrical coordinates $(r, \theta, z)$ to be reduced to a two-dimensional formulation in the meridional plane $(r, z)$, under the assumption that all field variables are independent of the angular coordinate $\theta$. The displacement field is then characterized by two components, $u_r(r,z)$ and $u_z(r,z)$, corresponding to the radial and axial directions, respectively. However, due to the coupling induced by the angular component of the strain tensor, namely $\varepsilon_{\theta\theta} = u_r / r$, the hoop stress $\sigma_{\theta\theta}$ contributes to the equilibrium equations and must be explicitly considered in the weak formulation. Additionally, the cylindrical geometry introduces a weight function $r$ in all integrals over the domain and boundary, affecting the definition of inner products and energy norms. These geometric and analytical features necessitate careful treatment in both the derivation and discretization of the variational problem, particularly to ensure consistency, symmetry, and coercivity of the resulting bilinear forms. The  finite element method and the axisymmetric formulation were extensively explored in the literature as in \cite{oden1970axisymmetric}, \cite{weizang1980axisymmetric}, \cite{bentley2021axisymmetric}, and \cite{chekhov2024nonincremental}.

\paragraph{}
The author in \cite{yaw2023axisymmetric} proposes a first-order consistent axisymmetric Virtual Element Method (VEM) for problems in elasticity and plasticity. The formulation generalizes the classical two-dimensional VEM by incorporating mean value coordinates (MVC) to evaluate shape functions at the element centroid, a key step for computing the tangential strain component in axisymmetric configurations. Several benchmark tests—such as elastic cylinders, spherical shells, and elasto-plastic elements—demonstrate the method’s effectiveness in avoiding volumetric locking and handling nearly incompressible materials. In contrast, the present formulation constructs the projection operator explicitly through a boundary-based integral decomposition, employing a projection matrix \( \mathbf{B} \), and introduces a stabilization mechanism based on the projection matrix \( \mathbf{P} \). Rather than relying on centroid-based shape function evaluations via MVC, this formulation adopts a quadrature-based strategy to approximate the volumetric correction and assembles the stabilization term using edge integrals. Furthermore, a detailed derivation of the axisymmetric variational formulation is provided, along with algorithmic procedures tailored for polygonal meshes.

\paragraph{}Error estimation plays a central role in the numerical analysis of variational methods such as VEM. In particular, \emph{a priori} error estimates provide theoretical bounds on the approximation error before the numerical solution is computed. These estimates typically depend on the mesh size $h$, the regularity of the exact solution, and the polynomial degree of the method, allowing one to predict the convergence rate of the scheme. They are essential for establishing the theoretical soundness of the method and for guiding mesh refinement strategies in uniform discretizations. On the other hand, \emph{a posteriori} error estimates quantify the actual error after the numerical solution has been computed, relying on residuals or recovery techniques. Such estimates are crucial for adaptive mesh refinement procedures, as they provide localized error indicators that enable efficient allocation of computational resources. In the context of VEM, both types of estimates require particular attention due to the implicit nature of the basis functions and the presence of stabilization terms, which must be carefully accounted for in the error analysis.

\paragraph{}
The work in \cite{veiga2017stability} presents an in-depth stability and error analysis of the Virtual Element Method (VEM) for a diffusion-type elliptic problem, introducing a generalized framework that relaxes classical shape-regularity assumptions. In particular, the authors consider polygonal meshes with arbitrarily small edges and develop a stability condition where the discrete stabilization term needs only to control polynomials, rather than the entire discrete space. Within this setting, they derive optimal a priori error estimates in the energy norm, with a logarithmic degradation in the convergence rate dependent on the ratio between the element diameter and the minimum edge length. The analysis includes a comparison of different stabilization choices, notably proving that a simplified stabilization involving only boundary degrees of freedom maintains stability and convergence. This work advances the theoretical foundation of VEM by extending its applicability to more general mesh configurations and clarifying the minimal requirements for consistency and stability.

\paragraph{}
The work in \cite{chen2018error} develops a rigorous theoretical foundation for the Virtual Element Method (VEM) by addressing key analytical challenges related to interpolation error estimates, stability, and inverse inequalities on general polygonal meshes. To overcome the absence of a reference element—a common tool in classical finite element theory—the authors introduce the concept of virtual triangulation, whereby each polygonal element is subdivided internally to preserve geometric properties while enabling classical analytical techniques without requiring explicit basis functions. Within this framework, they construct suitable projection operators and establish interpolation error bounds, norm equivalence between the stabilization term and the continuous energy norm on the kernel of the projection, and discrete Poincaré and inverse inequalities. A notable strength of the analysis lies in its reliance on mild mesh regularity assumptions, such as star-shapedness and bounded aspect ratios, thereby broadening the applicability of the results. The study formalizes several aspects of VEM theory that were previously only heuristically understood, offering a robust analytical basis for future developments.

The main contributions of this work are:

\begin{itemize}
    \item \textbf{Axisymmetric Virtual Element Formulation for Linear Elasticity:} Presented a first-order Virtual Element Method (VEM) tailored for axisymmetric elasticity problems, rigorously derived from the governing equilibrium equations in cylindrical coordinates and incorporating appropriate weighting through the radial coordinate \( r \). This extends classical 2D VEM formulations to axisymmetric settings with full mathematical consistency.

    \item \textbf{Weighted A Priori Interpolation Error Estimate:} Developed a novel a priori error estimate in weighted Sobolev norms, accounting explicitly for the radial coordinate. The analysis includes a constructive proof of norm equivalence and stabilization control, extending the VEM theory to axisymmetric problems and offering new insight into approximation properties under physical volume measures.

    \item \textbf{Volumetric Correction via Boundary-Based Projection:} Introduced a quadrature-based strategy for computing the volumetric correction term in the projection operator, avoiding the need for internal basis functions. The method approximates integrals involving divergence terms through geometric decomposition and is compatible with general polygonal meshes.

    \item \textbf{Stabilization via Boundary Integration and Projection Operator:} Designed a stabilization term entirely based on boundary integrals and the projection matrix \( \mathbf{P} \), ensuring consistency and robustness even for non-regular polygonal elements. This approach simplifies implementation and adheres to VEM's core principles of minimal internal structure.

    \item \textbf{Unified Framework for Axisymmetric VEM with Weighted and Geometric Consistency:} Established a mathematically consistent formulation that integrates weighted interpolation theory, boundary-based projection techniques, and stabilization via projection operators, all tailored for axisymmetric linear elasticity. The proposed framework preserves physical fidelity through the radial weighting and enables accurate approximation on general polygonal meshes without requiring internal basis functions.
\end{itemize}

\paragraph{}
The remainder of this paper is organized as follows. Section~\ref{sec:axisymmetric} presents the governing equations for linear elasticity in the axisymmetric setting, including the weak form derived with appropriate cylindrical weighting. Section~\ref{sec:vem_formulation} introduces the Virtual Element Method formulation, detailing the construction of the projection operator, the consistency matrix, and the stabilization term. Section~\ref{sec:error_estimate} establishes a weighted \emph{a priori} interpolation error estimate adapted to the axisymmetric context. Section~\ref{sec:numerical_results} provides numerical validation of the formulation through patch tests. Finally, Section~\ref{sec:conclusion} summarizes the main contributions and outlines potential directions for future work.

\section{Linear elastic axisymmetric formulation}
\label{sec:axisymmetric}

\paragraph{}
Axisymmetry implies that all physical fields—displacement, strain, and stress—are independent of the azimuthal coordinate $\theta$, effectively reducing a three-dimensional problem in cylindrical coordinates $(r, \theta, z)$ to a two-dimensional one in the meridional plane $(r,z)$. Accordingly, the displacement field is given by $\mathbf{u} = [u_r(r,z), u_z(r,z)]^T \in \left[ C^2(\Omega) \right]^2 \cap \left[ C^1(\overline{\Omega}) \right]^2$, where $u_r$ and $u_z$ represent the radial and axial displacements, respectively.

\paragraph{}
Let $\Omega \subset \mathbb{R}^2$ denote a polygonal domain in the $(r,z)$-plane, corresponding to the cross-section of a body with rotational symmetry. For clarity of exposition, we neglect body forces. The equilibrium equations for axisymmetric linear elasticity in cylindrical coordinates are then:
\begin{equation}\label{eq:equilibrium}
    \begin{cases}
        \frac{\partial \sigma_{rr}}{\partial r} + \frac{\partial \sigma_{rz}}{\partial z} + \frac{\sigma_{rr}-\sigma_{\theta\theta}}{r} = 0, \\
        \frac{\partial \sigma_{rz}}{\partial r} + \frac{\partial \sigma_{zz}}{\partial z} + \frac{\sigma_{rz}}{r} = 0,
    \end{cases}
\end{equation}
where $\sigma_{rr}$ and $\sigma_{zz}$ denote the radial and axial normal stresses, $\sigma_{\theta\theta}$ is the hoop (circumferential) stress, and $\sigma_{rz}$ is the shear stress.

\paragraph{}
The corresponding strain-displacement relations in cylindrical coordinates under axisymmetry are given by:
\begin{equation}
    \begin{cases}
        \varepsilon_{rr} = \frac{\partial u_r}{\partial r}, \\
        \varepsilon_{\theta\theta} = \frac{u_r}{r}, \\
        \varepsilon_{zz} = \frac{\partial u_z}{\partial z}, \\
        \gamma_{rz} = \frac{1}{2} \left( \frac{\partial u_r}{\partial z} + \frac{\partial u_z}{\partial r} \right),
    \end{cases}
\end{equation}
where $\gamma_{rz}$ is the engineering shear strain.

\paragraph{}
The material is assumed to be isotropic and homogeneous, characterized by the Lamé parameters $\lambda$ and $\mu$, with $\mu > 0$ and $\lambda + 2\mu > 0$. The constitutive relations (Hooke's law) in terms of the stress and strain tensors are:
\begin{equation}
    \begin{cases}
        \sigma_{rr} = 2\mu \varepsilon_{rr} + \lambda (\varepsilon_{rr} + \varepsilon_{\theta\theta} + \varepsilon_{zz}), \\
        \sigma_{\theta\theta} = 2\mu \varepsilon_{\theta\theta} + \lambda (\varepsilon_{rr} + \varepsilon_{\theta\theta} + \varepsilon_{zz}), \\
        \sigma_{zz} = 2\mu \varepsilon_{zz} + \lambda (\varepsilon_{rr} + \varepsilon_{\theta\theta} + \varepsilon_{zz}), \\
        \sigma_{rz} = 2\mu \gamma_{rz}.
    \end{cases}
\end{equation}

\paragraph{}
Boundary conditions are imposed on the boundary $\partial \Omega = \Gamma_D \cup \Gamma_N$, where $\Gamma_D$ and $\Gamma_N$ denote the Dirichlet and Neumann boundaries, respectively. The essential boundary condition is given by $\mathbf{u} = \mathbf{g}$ on $\Gamma_D$, while the natural boundary condition is prescribed as $\boldsymbol{\sigma} \mathbf{n} = \mathbf{f}$ on $\Gamma_N$, where $\mathbf{n}$ is the outward unit normal vector to $\partial \Omega$.

\paragraph{}
To derive the weak form, we multiply each equilibrium equation in \eqref{eq:equilibrium} by the corresponding component of a test function $\mathbf{v} \in \left[ H^1_0(\Omega) \right]^2$, integrate over $\Omega$, and apply the divergence theorem. The axisymmetric weighting due to the cylindrical coordinate system introduces an additional factor of $r$ in the volume and boundary integrals. The resulting weak formulation reads:
\begin{equation}\label{eq:weak_formulation}
    a(\mathbf{u}, \mathbf{v}) = (\mathbf{f}, \mathbf{v}) \quad \forall \mathbf{v} \in \left[ H^1_0(\Omega) \right]^2,
\end{equation}
where
\begin{equation}
    a(\mathbf{u}, \mathbf{v}) = \int_\Omega \boldsymbol{\sigma}(\mathbf{u}) : \boldsymbol{\varepsilon}(\mathbf{v}) \, r \, dr \, dz,
\end{equation}
and
\begin{equation}
    (\mathbf{f}, \mathbf{v}) = \int_{\Gamma_N} \mathbf{f} \cdot \mathbf{v} \, r \, dS.
\end{equation}
The factor $r$ ensures that the formulation respects the physical volume measure in cylindrical coordinates and is essential for preserving the symmetry and coercivity of the bilinear form in the axisymmetric setting.

\section{Virtual element formulation}
\label{sec:vem_formulation}

\paragraph{}
Let the computational domain $\Omega \subset \mathbb{R}^2$ represent the meridional section of a three-dimensional axisymmetric body. The domain is partitioned into a mesh $\mathcal{T}_h$, consisting of non-overlapping polygonal elements \( E \), such that each \( E \in \mathcal{T}_h \) is star-shaped with respect to a ball of radius at least \( \gamma h_E \), where \( h_E = \text{diam}(E) \) and \( \gamma > 0 \) are mesh regularity parameters. This assumption ensures the geometric admissibility required for VEM interpolation, projection operators, and stability estimates. The global bilinear form introduced in the weak formulation \eqref{eq:weak_formulation} can be decomposed into a sum of local contributions:
\begin{equation}
    a(\mathbf{u}, \mathbf{v}) = \sum  \limits_{E \in \mathcal{T}_h} a_E(\mathbf{u}, \mathbf{v}),
\end{equation}
where each local bilinear form \( a_E(\cdot, \cdot) \) is defined over the element \( E \) and incorporates the axisymmetric weight \( r \), as will be detailed in the subsequent sections.

\paragraph{}
Following the standard framework introduced in \cite{beirao2013vem}, the virtual element approximation of \eqref{eq:weak_formulation} seeks a function \( \mathbf{u}_h \in V_h \), where \( V_h \) is the discrete virtual space, such that:
\begin{equation}
    a_h(\mathbf{u}_h, \mathbf{v}_h) = \langle \mathbf{f}, \mathbf{v}_h \rangle \qquad \forall \mathbf{v}_h \in V_h,
\end{equation}
where \( a_h: V_h \times V_h \rightarrow \mathbb{R} \) is a computable discrete bilinear form approximating the continuous one, and \( \langle \cdot, \cdot \rangle \) denotes the action of the load functional on the test space. This form also admits a decomposition into local terms:
\begin{equation}
    a_h(\mathbf{u}_h, \mathbf{v}_h) = \sum \limits_{E \in \mathcal{T}_h} a_{h,E}(\mathbf{u}_h, \mathbf{v}_h),
\end{equation}
where each \( a_{h,E}: V_{h,E} \times V_{h,E} \rightarrow \mathbb{R} \) is the local discrete bilinear form acting on the restriction of the virtual space to the element \( E \), denoted \( V_{h,E} \). The construction of \( a_{h,E} \) is designed to ensure consistency, stability, and computability in the presence of non-polynomial trial and test functions.

\subsection{Virtual element space}

\paragraph{} The VEM seeks to define a discrete subspace $V_{h,E} \subset \left[ H^1(E) \right]^2$ that retains desirable approximation properties while being compatible with general polygonal geometries and computational constraints. For polynomial degree $k=1$, the local virtual element space is defined as:
\begin{equation}
    \mathbf{V}_{h\mid E} = \left\{ 
    \mathbf{v}_h \in [H^1(E)]^2 \cap [C^0(\overline{E})]^2 \,\middle|\, 
    \Delta \mathbf{v}_h = 0 \text{ in } E,\ 
    \mathbf{v}_h|_{\partial E} \text{ is linear on each edge} 
    \right\},
\end{equation}
where $[C^0(\overline{E})]^2$ ensures continuity across element boundaries, and 
\begin{equation}
    \Delta \mathbf{v}_h = \left[ \frac{\partial^2v_r}{\partial r^2} + \frac{1}{r} \frac{\partial v_r}{\partial r} - \frac{v_r}{r^2} + \frac{\partial^2v_z}{\partial z^2}, \frac{\partial^2v_z}{\partial r^2} + \frac{1}{r}\frac{\partial v_z}{\partial r} + \frac{\partial^2 v_z}{\partial z^2} \right] = 0
\end{equation}

This definition ensures that functions in $V_{h,E}$ are harmonic in the interior of the element (i.e., each component solves the Laplace equation in a weak sense) and that their traces on the boundary are piecewise affine. These properties are significant for several reasons. First, the harmonicity condition implies that the space is nontrivial yet smooth enough to ensure convergence and stability. Second, prescribing boundary behavior allows all internal information to be inferred from the values at the vertices — the only degrees of freedom used in the formulation with $k=1$.

In other words, the degrees of freedom are defined as the displacement values at the vertices on $E$:
\begin{equation}
    \mathbf{v}_h(V_i) = \left[ \begin{array}{c}
           v_r(V_i)\\ 
           v_z(V_i) 
    \end{array} \right],
\end{equation}
with $i = 1, ..., m_E$, where $m$ is the number of vertices in element $E$. In this way, no internal degrees of freedom are included (averages or moments). This ensures unisolvence, i.e., the degrees of freedom uniquely determine $\mathbf{v}_h \in V_{h,E}$, as the harmonic condition and linear boundary data suffice for $k=1$.

From an analytical point of view, the virtual element space $V_{h,E}$ is constructed as a conforming subspace of the Hilbert space $[H^1(E)]^2$, and thus inherits its topology and structure. More precisely, $V_{h,E} \subset [H^1(E)]^2$, meaning that each virtual function $\mathbf{v}_h \in V_{h,E}$ possesses square-integrable first-order weak derivatives and is therefore suitable for variational formulations of elliptic problems. This embedding guarantees that classical results from the theory of Sobolev spaces, such as the Lax–Milgram lemma and Céa’s lemma, can be applied to analyze the well-posedness and convergence of the VEM discretization.

A key functional analytic tool underpinning the VEM construction is the trace theorem, which ensures that the restriction of $\mathbf{v}_h \in [H^1(E)]^2$ to the boundary $\partial E$ is well-defined in $[L^2(\partial E)]^2$, and even in $[H^{1/2}(\partial E)]^2$ in a more refined sense. By prescribing the trace of $\mathbf{v}_h$ to lie in the space of vector-valued piecewise linear functions on $\partial E$, VEM exploits this boundary control to uniquely determine the function inside the element through harmonic extension. In particular, the interior behavior is inferred from boundary data via the Poisson problem with Dirichlet conditions, which is well-posed in $H^1$ and preserves regularity under mild assumptions on the domain.

Moreover, the virtual element space can be viewed as the solution space to a constrained variational problem:
\begin{equation}
    \text{Find } \mathbf{v}_h \in [H^1(E)]^2 \text{ such that }
\begin{cases}
\Delta \mathbf{v}h = 0 & \text{in } E, \\
\mathbf{v}h|{\partial E} \in [\mathbb{P}1(e)]^2 \text{ on each edge } e \subset \partial E.
\end{cases}
\end{equation}

In this sense, $V_{h,E}$ is an affine subspace of $[H^1(E)]^2$, determined by solving a boundary value problem with polynomial data. This formulation guarantees that $V_{h,E}$ is finite-dimensional and that its basis functions (although not explicitly constructed) are uniquely determined by their degrees of freedom — namely, the boundary vertex values.

 The space $V_{h,E}$ is conforming with respect to the global VEM space $\mathbf{V}_h$ $\subset [H^1(\Omega)]^2$, in the sense that inter-element continuity is preserved. This is achieved by enforcing continuity of displacement at shared vertices. As a result, the global virtual space inherits the structure of a subspace of $[H^1(\Omega)]^2$, which is essential for both the consistency of the discrete bilinear form and the optimal convergence of the method.

\subsection{Projection operator}
Functions in $V_{h|E}$ are not known in closed form inside the element; rather, they are known only implicitly, via their boundary values and variational properties. This gives rise to the term "virtual". Despite this, the method retains full numerical tractability by projecting virtual functions onto polynomial spaces using inner product relations that are computable from the degrees of freedom. 

The projection operator $\Pi^\nabla: V_{h,E} \rightarrow \mathbb{P}_0(E)^4$ maps the function $\mathbf{v}_h \in V_{h,E}$ into the polynomial space $\mathbb{P}_0(E)^4$, ensuring that certain properties of $\mathbf{v}_h$ are preserved. The projection is defined by the following two conditions:
\begin{enumerate}
    \item \textbf{Gradient orthogonality}: for all $\mathbf{q} \in \mathbb{P}_0(E)^4$:
    \begin{equation}
        a_E(\Pi^\nabla\mathbf{v}_h, \mathbf{q}) = a_E(\mathbf{v}_h, \mathbf{q}),
    \end{equation}
    where $a_E(\cdot, \cdot)$ is the bilinear form given by
    \begin{equation}
        a_E(\mathbf{u}_h, \mathbf{v}_h) = \int \limits_E \nabla \mathbf{u}_h \nabla \mathbf{v}_h dx.
    \end{equation}
    This condition ensures that the gradient of $\Pi^\nabla \mathbf{v}_h$ matches the gradient of $\mathbf{v}$ when tested against polynomials in $ \mathbb{P}_0(E)^4$.

    \item \textbf{Zero-mean condition}: The projection $\Pi^\nabla \mathbf{v}_h$ must also satisfy a consistency condition for its mean value $P_0$: 
    \begin{equation}
        P_0(\Pi^\nabla \mathbf{v}_h) = P_0(\mathbf{v}_h),
    \end{equation}
    where $P_0$ represents the mean value operator, and for $k=1$ it is:
    \begin{equation}
        P_0(\mathbf{v}_h) = \frac{1}{m_E} \sum \limits^{m_E}_{i=1} \mathbf{v}_h(V_i).
    \end{equation}
    This ensures that the projection retains the same mean value as the original function $\mathbf{v}_h$.
\end{enumerate}

\begin{remark}
    For $k>1$, the mean value operator is defined as:
    \begin{equation}
        P_0(\mathbf{v}_h) = \frac{1}{|E|}\int \limits_E \mathbf{v}_h d\mathbf{x},
    \end{equation}
    where $|E|$ is the area or volume of $E$.
\end{remark}

For $k = 1$, this projection is into constant strain fields, and is defined weakly through the relation:

\begin{equation}\label{eq:projection_axisymmetric}
    \int_E \left( \Pi \mathbf{v}_h \right)^T \mathbf{C} \boldsymbol{\varepsilon}^p \, r \, dr \, dz = \int_E \boldsymbol{\varepsilon}(\mathbf{v}_h)^T \mathbf{C} \boldsymbol{\varepsilon}^p \, r \, dr \, dz, \quad \forall \boldsymbol{\varepsilon}^p \in \mathbb{P}_0(E)^4,
\end{equation}
where
\begin{equation}
    \boldsymbol{\varepsilon}^p = \left[ \begin{array}{c}
         \varepsilon^p_r  \\
         \varepsilon^p_z \\
         \gamma^p_{rz} \\
         \varepsilon^p_\theta
    \end{array}\right]
\end{equation}
is the constant strain field, and $\boldsymbol{\varepsilon}(\mathbf{v}_h)$ is the strain derived from $\mathbf{v}_h$. The projection ensures that although the function is virtual, its behavior can be captured and utilized in the stiffness matrix and load vector through polynomial representatives.

\subsection{Compute the projection matrix}
\paragraph{}From the left-hand side of equation (\ref{eq:projection_axisymmetric}):
\begin{equation}\label{eq:lhs_proj}
    \int \limits_E \left( \Pi^\nabla \mathbf{v}_h \right)^T \mathbf{C} \boldsymbol{\varepsilon}^p r drdz = \left( \Pi^\nabla \mathbf{v}_h \right)^T \mathbf{C} \boldsymbol{\varepsilon}^p \int \limits_E r drdz
\end{equation}
This equation holds true once the same polynomial choice as $\boldsymbol{\varepsilon}^p$ is made for the projection $\Pi^\nabla \mathbf{v}_h$. For triangular elements, the geometric term can be computed as:
\begin{equation}
    \int \limits_E rdrdz = \overline{r} |E|,
\end{equation}
where $\overline{r}$ is the centroid radial coordinate. For more general polygons, as discussed later in this text, a triangulation technique is used to compute this term exactly. 

\paragraph{} The right-hand side of (\ref{eq:projection_axisymmetric}) involves $\boldsymbol{\varepsilon}(\mathbf{v}_h)$, which is not directly computable. By applying the Divergence Theorem, it leads to:
\begin{equation}\label{eq:rhs_divergence}
    \int_E \boldsymbol{\varepsilon}(\mathbf{v}_h)^T \mathbf{C} \boldsymbol{\varepsilon}^p \, r \, dr \, dz \ \int \limits_{\partial E} \mathbf{v}_h \cdot \left( \mathbf{C} \boldsymbol{\varepsilon}^p \mathbf{n} \right)r dS - \int \limits_E \mathbf{v}_h \cdot \nabla \cdot (\mathbf{C} \boldsymbol{\varepsilon}^p)rdrdz
\end{equation}
where $\mathbf{n} = [n_r, n_z]^T$ is the outward unit normal vector on $\partial E$ and
\begin{equation}
    \mathbf{C} \boldsymbol{\varepsilon}^p = \boldsymbol{\sigma}^p = \left[ \sigma_r^p, \sigma_z^p, \tau_{xz}^p, \sigma_\theta^p \right]^T.
\end{equation}
In cylindrical coordinates, it is true that:
\begin{equation}\label{eq:divergence}
    \nabla \cdot \left( \mathbf{C} \boldsymbol{\varepsilon}^p \right) = \left[ \begin{array}{c}
         \frac{\partial \sigma_r^p}{\partial r} + \frac{\sigma^p_r - \sigma^p_\theta}{r}  \\
         \frac{\partial \sigma^p_z}{\partial z}
    \end{array} \right] = \left[ \begin{array}{c}
        \frac{\sigma^p_r - \sigma^p_\theta}{r}  \\
         0
    \end{array} \right]
\end{equation}
From (\ref{eq:divergence}) in (\ref{eq:rhs_divergence}):
\begin{equation}\label{eq:volumetric_correction}
    \int \limits_E \mathbf{v}_h \cdot \nabla \cdot \left( \mathbf{C} \boldsymbol{\varepsilon}^p \right)rdrdz = \left( \sigma_r^p - \sigma^p_\theta \right) \int \limits_E v_r dr dz.
\end{equation}
In this work, the right-hand side of equation (\ref{eq:volumetric_correction}) is called volumetric correction.

\paragraph{}
The implementation of the volumetric correction term follows directly from the divergence theorem decomposition discussed in equation (\ref{eq:rhs_divergence}). The purpose of this term is to correctly capture the volumetric effects associated with the difference between the radial and hoop stress components, \(\sigma_r^p - \sigma_\theta^p\), in the axisymmetric setting. Since \(\boldsymbol{\varepsilon}(\mathbf{v}_h)\) is not explicitly computable in the Virtual Element Method, the integral involving its divergence must be approximated based solely on the available degrees of freedom (vertex values).

\paragraph{}
To this end, for each element \(E\), the algorithm proceeds by first computing the stress differences \(\sigma_r^p - \sigma_\theta^p\) associated with each base strain \(\boldsymbol{\varepsilon}^p\), via the constitutive matrix \(\mathbf{C}\). These quantities are constant for a given base strain and thus can be precomputed before looping over the degrees of freedom.

\paragraph{} The degrees of freedom associated with radial displacements (even indices) are then treated individually. For each radial degree of freedom, the associated volumetric integral is evaluated by subdividing the polygonal element into a set of triangles, each formed by the element centroid and a pair of adjacent vertices. Only triangles whose edges contain the vertex corresponding to the current degree of freedom are considered. For each such triangle, the contribution is calculated by approximating the shape function \(N_i\) (associated with the vertex) by its average value over the triangle, weighted by the local radial coordinate and the area of the triangle. This strategy provides a consistent and physically meaningful approximation of the integral \(\int_E v_r \, dr \, dz\) required for the volumetric correction. Thus, equation (\ref{eq:volumetric_correction}) can be approximated as:
\begin{equation}\label{eq:volumetric_correction_tri}
    \int_E \mathbf{v}_h \cdot \nabla \cdot \left( \mathbf{C} \boldsymbol{\varepsilon}^p \right) r \, dr \, dz 
    \approx \left( \sigma_r^p - \sigma_\theta^p \right) \sum_{i=1}^{n_{\text{tri}}} \int_{E^{\text{tri}}_i} v_r \, dr \, dz,
\end{equation}
where \( n_{\text{tri}} \) is the number of triangles resulting from the decomposition of the element \( E \), and \( E^{\text{tri}}_i \) denotes the \(i\)-th triangular subdomain.

\paragraph{}
Finally, the contribution for each degree of freedom is assembled by multiplying the computed integral by the corresponding stress difference for each base strain. This results in a correction matrix of shape \((n_{\text{dofs}}, n_{\text{strains}})\), where \(n_{\text{dofs}}\) is twice the number of vertices (due to the two displacement components) and \(n_{\text{strains}}\) corresponds to the number of polynomial base strains considered. This construction ensures full compatibility with the VEM structure, relying solely on boundary degrees of freedom and avoiding any need for internal shape function evaluations.

\paragraph{}
The full projection equation is given by equations (\ref{eq:lhs_proj}), (\ref{eq:rhs_divergence}), and (\ref{eq:volumetric_correction_tri}):
\begin{equation}\label{eq:projection_matrix}
    \left( \Pi^\nabla \mathbf{v}_h \right)^T \mathbf{C} \boldsymbol{\varepsilon}^p \int \limits_E r drdz = \int \limits_{\partial E} \mathbf{v}_h \cdot \left( \mathbf{C} \boldsymbol{\varepsilon}^p \mathbf{n} \right)r dS - \left( \sigma_r^p - \sigma_\theta^p \right) \sum_{i=1}^{n_{\text{tri}}} \int_{E^{\text{tri}}_i} v_r \, dr \, dz.
\end{equation}
By solving this equation, the polynomial $\Pi^\nabla \mathbf{v}_h$ is fully defined, such that:
\begin{equation}
    \Pi^\nabla \mathbf{v}_h = \mathbf{B}\mathbf{d},
\end{equation}
where $\mathbf{B}$ is the projection matrix and $\mathbf{d}$ is the vector regarding the degrees of freedom.

\paragraph{}
The implementation of the projection matrix \(\mathbf{B}\) proceeds by first evaluating the two terms that compose the right-hand side of the projection equation (\ref{eq:projection_matrix}). Specifically, the boundary integrals \(\int_{\partial E} \mathbf{v}_h \cdot (\mathbf{C} \boldsymbol{\varepsilon}^p \mathbf{n})r \, ds\) are computed using numerical quadrature along the element edges, relying solely on vertex values and local shape functions defined at the vertices. A discussion regarding the implementation of the boundary integral is presented in Appendix \ref{ap:implementation}.

\paragraph{}
Once the boundary integrals and volumetric corrections are assembled, the discrete right-hand side vector is formed by subtracting the volumetric correction from the boundary contributions. This represents the action of the original virtual displacement field \(\mathbf{v}_h\) when tested against the polynomial basis strains \(\boldsymbol{\varepsilon}^p\). The scaling by the weighted volume \(\int_E r \, dr \, dz\) is then applied, where the weighted volume is approximated by the product of the element area and the mean radial coordinate of the vertices.

\paragraph{}
The next step constructs the coefficient matrix of the projection system, consisting of the constitutive tensor \(\mathbf{C}\) applied to the polynomial strain basis vectors \(\boldsymbol{\varepsilon}^p\). This system encodes how the projected polynomial field interacts with the elastic energy in the virtual element framework. The final projection matrix \(\mathbf{B}\) is obtained by solving a local least-squares problem for each degree of freedom, ensuring that the projected polynomial strain field best fits the virtual displacement behavior in an energy-consistent manner.

\paragraph{}
A special treatment is necessary for the axial displacement degrees of freedom (associated with the \(z\)-direction). To preserve physical consistency, the shear strain component in the axial direction must vanish for pure axial deformations. Therefore, when solving for axial degrees of freedom, the shear component of the right-hand side vector is explicitly zeroed out before solving the least-squares problem. This modification ensures that pure axial displacements do not spuriously generate shear stresses, maintaining fidelity with the expected mechanical behavior in axisymmetric conditions.

\paragraph{}
By carefully constructing the projection matrix \(\mathbf{B}\) in this way, the proposed methodology ensures that the projection operator \(\Pi^\nabla\) respects both the geometric structure of the mesh and the physical symmetries of the axisymmetric problem, while remaining fully computable using only degrees of freedom available on the boundary of the elements.

\paragraph{}
The consistency term of the stiffness matrix is given by:
\begin{equation}\label{eq:kc}
    \mathbf{K}_c = \int \limits_E \mathbf{B}^T \mathbf{C} \left( \mathbf{B} \right) r\, dr\, dz.
\end{equation}

\subsection{Stabilization term}
The stabilization term used in this work aims to prevent a rank-deficient stiffness matrix while interfering as little as possible with the final stiffness matrix. The following points were considered during the development of this term: it should vanish when the solution lies within the polynomial space handled by the consistency term, and it should be computable from the degrees of freedom without adding unnecessary complexity.

On an element $E \in \mathcal{T}_h$, the stabilization term is defined as:
\begin{equation}\label{eq:stab_term}
    S_E (\mathbf{u}_h, \mathbf{v}_h) = \tau h_E^{-1} \sum \limits_{e \in \partial E} \int \limits_{e} 2 \pi r \left( \mathbf{u}_h-\Pi^\nabla \mathbf{u} \right) \cdot \left( \mathbf{v}_h-\Pi^\nabla \mathbf{v} \right) ds,
\end{equation}
where $\tau$ is a positive constant (e.g., scaled relative to material properties like the shear modulus), $\sum_{e \in \partial E} \int_{e} (\cdot) \cdot (\cdot) ds$ is an inner product summed over the edges of the element boundary $\partial E$ and $ds$ is the arc-length measure along the edge $e$.

Note that the term $\mathbf{u}_h - \Pi^\nabla \mathbf{u}_h$ represents the non-polynomial part of the solution, which includes the spurious modes that could lead to rank deficiency. By penalizing this difference, the stabilization term adds positive contributions to  the stiffness matrix for these modes, ensuring it remains invertible. By defining the inner product over the element boundary $\partial E$, the term leverages the degrees of freedom directly available in VEM, avoiding the need for internal computations. This keeps it computationally efficient and consistent with VEM's design. 

Regarding the implementation, the stabilization matrix is constructed by looping over the edges of the element and performing a numerical integration of the bilinear form 
\begin{equation}
    \sum_{e \in \partial E} \int_e 2\pi r\, (\mathbf{u}_h - \Pi^\nabla \mathbf{u}_h) \cdot (\mathbf{v}_h - \Pi^\nabla \mathbf{v}_h)\, ds,
\end{equation}
where the integrand is approximated via quadrature. For each edge, the method evaluates contributions from the relevant degrees of freedom associated with the edge’s two vertices. Shape function values are computed at quadrature points and used to evaluate local basis interactions. A one-point quadrature rule is applied for vertical edges (where the radial coordinate $r$ is constant), and a two-point Gauss quadrature is used otherwise to account for radial variation.

The stabilization matrix is assembled by computing the weighted inner product between shape functions at each quadrature point, scaled by the corresponding entry of $\mathbf{I} - \mathbf{P}$, the radial coordinate $r$, and the integration weight. This yields a matrix that adds stiffness only in the directions not controlled by the consistency projection, thus preserving polynomial exactness while guaranteeing numerical stability. The stabilization matrix is written as:
\begin{equation}\label{eq:ks}
    \mathbf{K}_s = \tau h_E^{-1} \sum_{e \in \partial E} \sum_{q=1}^{n_q} 2\pi r_q \, w_q \, (\mathbf{I} - \mathbf{P}) \mathbf{N}(s_q) \mathbf{N}(s_q)^T \, |e|,
\end{equation}
where $\tau$ is the stabilization parameter depending on the material properties, $h_E$ is the diameter of the element $E$, $r_q$ is the radial coordinate evaluated at the quadrature point $s_q$, $w_q$ is the corresponding quadrature weight, $\mathbf{N}(s_q)$ is the vector containing the shape function values evaluated at $s_q$ for the degrees of freedom associated with the edge such that:
\begin{equation}
    \mathbf{N}(s_q) = \left[ \begin{array}{c}
         1-s_q  \\
         s_q
    \end{array} \right],
\end{equation}
and $|e|$ denotes the length of the edge $e$. 
The projection matrix $\mathbf{P}$ appearing in the stabilization is defined as:
\begin{equation}
    \mathbf{P} = \mathbf{B}^T \left( \mathbf{B} \mathbf{B}^T \right)^{\dagger} \mathbf{B},
\end{equation}

Therefore, the local stiffness matrix can be written using equations (\ref{eq:kc}) and (\ref{eq:ks}):
\begin{equation}
    \mathbf{K}^E = \mathbf{K}_c + \mathbf{K}_s.
\end{equation}

\subsection{Load term}
\paragraph{}
For boundary traction $\mathbf{t} = [t_r, t_z]^T$ applied on an edge $e$, the load term is:
\begin{equation}
    \int \limits_e \mathbf{v}_h \cdot \mathbf{t} ds = \int \limits_e (v_r\,t_r +v_z\,t_z )\,r\,ds.
\end{equation}
On edges where $r$ is constant (vertical edges), the integrand is linear if $\mathbf{v}_h$ is linear and $\mathbf{t}$ is constant, making the integration straightforward. However, for non-vertical edges, $r(s)$ varies linearly along the edge, and the product $v_r(s)r(s)$ becomes quadratic, requiring more than a single-point rule for integration.

\paragraph{}
The implementation of the load term integration follows directly from the axisymmetric variational form. For each edge where a boundary traction is prescribed, the algorithm parameterizes the edge linearly as a function of a scalar variable \( s \in [0,1] \), where the local coordinates are given by:
\[
r(s) = r_1 + s (r_2 - r_1), \quad z(s) = z_1 + s (z_2 - z_1),
\]
with \((r_1, z_1)\) and \((r_2, z_2)\) denoting the coordinates of the edge endpoints.

\paragraph{}
For each degree of freedom associated with a node on the edge, a corresponding shape function is defined: the function linearly decreases from 1 to 0 across the edge for the starting node and increases from 0 to 1 for the ending node. The traction components \((t_r, t_z)\) are evaluated at selected quadrature points along the edge, and the integrand \(N_i(s)\,t_r(s)\,r(s)\) (or \(N_i(s)\,t_z(s)\,r(s)\) for axial DOFs) is computed at each quadrature point.

\paragraph{}
Special care is taken to adapt the quadrature rule depending on the edge geometry. For vertical edges, where \(r\) remains constant, a one-point quadrature suffices because the integrand remains linear if the traction is constant. For non-vertical edges, where \(r\) varies linearly, a two-point Gauss quadrature is employed to capture the potential quadratic variation of the integrand accurately.

\section{Error analysis}
\label{sec:error_estimate}

\paragraph{} The error analysis presented in this section follows a classical structure adapted to the VEM in an axisymmetric setting. It is built upon establishing interpolation properties of the virtual space, stability estimates involving the discrete bilinear form, and consistency errors stemming from the discrete formulation. Special attention is given to the weighted $H^1$-seminorms associated with the radial coordinate, which are naturally induced by the axisymmetric geometry. The stabilization term is designed to satisfy norm equivalence properties essential for ensuring coercivity and continuity of the discrete bilinear form, which play a central role in the proofs. The results culminate in deriving optimal order a priori error estimates in terms of the mesh size h, providing a rigorous theoretical justification for the convergence of the method.

\paragraph{}Let $V_h$ be a virtual element space of order $k \geq 1$ on $\mathcal{T}_h$, and let $I_h \mathbf{u} \in V_h$ be the interpolant of $\mathbf{u} \in H^m(\Omega) \cap H^1_0(\Omega)$,  with $m>1$, defined via the degrees of freedom. Assume the  stabilization term $S_E$ in the VEM bilinear form $a_{h,E}$ is the one presented in equation (\ref{eq:stab_term}). For simplicity, consider here that $\tau =  1$. Also, it is assumed that $S_E$ vanishes for the polynomial fields up to the degree $k$, and ensures norm equivalence with $H^1$-seminorm. The following theorems are general results proved for any $k\geq 1$. 

\subsection{Interpolation error estimate}

\paragraph{}
The interpolation error estimate developed in this work introduces several significant innovations in the context of the Virtual Element Method (VEM), particularly for axisymmetric problems governed by weighted Sobolev norms. Classical VEM interpolation theorems (e.g., \cite{beirao2013vem}) typically address unweighted Sobolev spaces and do not account for the physical structure imposed by cylindrical coordinates. In contrast, the present result rigorously extends the interpolation theory to a weighted setting, where the seminorm incorporates the radial coordinate \(r\), naturally adapting to the axisymmetric volume element \(r\,dr\,dz\). This weighted framework is essential to maintaining physical consistency and has not been systematically treated in prior VEM literature.

\paragraph{}
Moreover, the error estimate presented here carefully and constructively handles the influence of the stabilization term. Unlike many classical treatments where norm equivalence between the stabilization and the \(H^1\)-seminorm on the kernel of the projection is simply assumed, this work explicitly proves the norm equivalence property (Theorem~\ref{theo:stab}). By doing so, it rigorously shows how the stabilization \(S_E\) controls the non-polynomial part of the approximation (i.e., the part orthogonal to the projection \(\Pi^\nabla\)). This level of explicitness ensures that the interpolation estimate is not only theoretically sound but also fully compatible with the discrete structure of the virtual element space, without hidden constants or heuristic arguments.

\paragraph{}
A further key innovation is the development and use of a trace inequality adapted to the weighted axisymmetric setting. To accurately estimate boundary contributions---which are central in the stabilization term---the analysis incorporates a trace inequality that accounts for the measure \(2\pi r\,ds\), rather than relying on standard Euclidean inequalities. This adjustment ensures that all estimates reflect the true physical and geometrical structure of the problem. Such a careful handling of the trace properties in weighted Sobolev spaces is rarely seen in standard VEM theory and highlights the level of mathematical detail underpinning the present results.

\paragraph{}
Finally, the interpolation error bound achieves the optimal convergence rate \(h^{m-l}\) for \(0\leq l \leq m\), with constants that are independent of the mesh size \(h\) and explicitly depending only on the regularity of the solution. The result is a sharp, fully consistent a priori error estimate for axisymmetric VEM, framed in a properly weighted functional setting, with constructive control over all discretization artifacts. Therefore, this theorem not only extends the applicability of VEM to new classes of problems but also strengthens the theoretical foundations of the method, providing tools that could be adapted to other weighted or geometrically complex settings in future research.

\begin{theo}\label{theo:interpolation}
    For each integer $l$, with $0\leq l \leq m$, there exists a constant $C>0$, independent of $h$,  such that
    \begin{equation}
        \left[ \mathbf{u}-I_h\mathbf{u} \right]_{H^l(\Omega)} \leq C h^{m-l} \left[ \mathbf{u} \right]_{H^m(\Omega)},
    \end{equation}
    for $1\leq m \leq k+1$, where the $H^1$-seminorm is weighted by the radial coordinate $r$:
    \begin{equation}
        \left[ \mathbf{u} \right]_{H^l(\Omega)}^2 = \int \limits_\Omega r \left| \nabla^l \mathbf{u} \right|^2drdz,
    \end{equation}
    and $\Omega$ is assumed to be bounded away from the axis $r=0$, ensuring $r\geq r_0 > 0$.
\end{theo}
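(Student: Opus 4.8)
The key observation that would drive the proof is that the hypothesis $r \ge r_0 > 0$, together with the boundedness of $\Omega$, renders the radial weight equivalent to a constant. Setting $R := \sup_{(r,z) \in \Omega} r < \infty$, one has pointwise $r_0 \le r \le R$, and hence for every admissible $\mathbf{w}$
\begin{equation}
\sqrt{r_0}\, | \mathbf{w} |_{H^l(\Omega)} \le [\mathbf{w}]_{H^l(\Omega)} \le \sqrt{R}\, | \mathbf{w} |_{H^l(\Omega)},
\end{equation}
where $| \cdot |_{H^l(\Omega)}$ denotes the standard (unweighted) $H^l$-seminorm. Consequently the weighted and unweighted seminorms are equivalent with constants independent of $h$, so it suffices to establish the estimate in the unweighted seminorm and transfer it back at the end, absorbing the factor $\sqrt{R/r_0}$ into the final constant $C$.

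\paragraph{} First I would reduce the global estimate to a sum of local contributions over the mesh $\mathcal{T}_h$. On each element $E$, whose diameter satisfies $h_E \le h$ and which is star-shaped with respect to a ball of radius $\gamma h_E$ by the mesh-regularity assumption, the Bramble--Hilbert / Dupont--Scott theorem furnishes a polynomial $\mathbf{p}_E \in [\mathbb{P}_k(E)]^2$, for instance an averaged Taylor polynomial of degree $m-1 \le k$, satisfying
\begin{equation}
| \mathbf{u} - \mathbf{p}_E |_{H^l(E)} \le C\, h_E^{m-l}\, | \mathbf{u} |_{H^m(E)}, \qquad 0 \le l \le m \le k+1,
\end{equation}
with $C$ depending only on $\gamma$, $k$, and the dimension. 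I would then exploit two structural properties of the interpolation operator $I_h$: polynomial preservation, $I_h \mathbf{p}_E = \mathbf{p}_E$, since the vertex degrees of freedom reproduce $[\mathbb{P}_k]^2$; and a scaled local $H^1$-stability bound. Writing $\mathbf{u} - I_h \mathbf{u} = (\mathbf{u} - \mathbf{p}_E) - I_h(\mathbf{u} - \mathbf{p}_E)$ and combining the triangle inequality with stability and the Bramble--Hilbert estimate yields the local bound $| \mathbf{u} - I_h \mathbf{u} |_{H^l(E)} \le C\, h_E^{m-l} | \mathbf{u} |_{H^m(E)}$. Squaring, summing over $E \in \mathcal{T}_h$, using $h_E \le h$, and taking square roots gives the global unweighted estimate, which together with the weight equivalence of the first step completes the argument.

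\paragraph{} The main obstacle is the local $H^1$-stability of $I_h$, namely a bound controlling $I_h \mathbf{w}$ in the $H^l(E)$-seminorm by suitably scaled seminorms of $\mathbf{w}$, because functions in $V_{h,E}$ are not known explicitly in the interior and the interpolant must be dominated through its boundary degrees of freedom. I would handle this following \cite{chen2018error}, passing through a scaled trace inequality together with discrete inverse and Poincar\'e inequalities on the virtual space, which bound the vertex-based degrees of freedom by boundary norms of $\mathbf{w}$ and, in turn, by its $H^m(E)$-seminorm. The one point requiring care in the present setting is that the relevant trace inequality carries the axisymmetric boundary measure $2\pi r\, ds$; since $r$ is again confined to $[r_0, R]$ on $\overline{\Omega}$, this weighted trace estimate differs from the Euclidean one only by the harmless factor $\sqrt{R/r_0}$, so the stability constant stays independent of $h$. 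With stability secured, the remaining steps are the routine assembly and scaling arguments sketched above.
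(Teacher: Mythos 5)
Your proposal is correct in outline but takes a genuinely different route from the paper on the decisive step. Both arguments share the same skeleton---a triangle inequality around a polynomial $\mathbf{p} \in [\mathbb{P}_k(E)]^2$ furnished by the Bramble--Hilbert lemma on star-shaped elements, followed by summation over the quasi-uniform mesh---but they diverge in how the non-polynomial remainder is controlled. The paper bounds $\left[ \mathbf{p} - I_h\mathbf{u} \right]_{H^l(E)}$ by routing it through the stabilization: it asserts $\mathbf{p} - I_h\mathbf{u} \in \ker\Pi^\nabla$, invokes the norm equivalence $C_0\, a_E \le S_E \le C_1\, a_E$ on the kernel (Theorem~\ref{theo:stab}), and then estimates $S_E(\mathbf{p}-I_h\mathbf{u},\, \mathbf{p}-I_h\mathbf{u})$ with the trace inequality carrying the axisymmetric measure $2\pi r\, ds$, keeping the radial weight explicit in every estimate. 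You instead write $\mathbf{u} - I_h\mathbf{u} = (\mathbf{u}-\mathbf{p}_E) - I_h(\mathbf{u}-\mathbf{p}_E)$, use polynomial reproduction plus local $H^1$-stability of $I_h$ in the style of \cite{chen2018error}, and dispatch the weight once and for all through the two-sided bound $r_0 \le r \le R$, which makes the weighted statement an immediate corollary of the unweighted one. Your route buys independence from the stabilization: the estimate holds for any choice of $S_E$, whereas the paper's bound is tied to its particular boundary stabilization and to the kernel property $\Pi^\nabla(\mathbf{p}-I_h\mathbf{u})=\mathbf{0}$, which the paper asserts rather than verifies; your route also isolates the one genuinely hard ingredient---the $H^1$-stability of the VEM interpolant, which cannot be checked directly since virtual functions are implicit---and points to the correct machinery (scaled trace, inverse, and discrete Poincar\'e inequalities) for it. The paper's route, conversely, keeps the weighted structure visible throughout and reuses Theorem~\ref{theo:stab}, which it needs anyway for stability of the discrete bilinear form. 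One caveat you inherit rather than create: $I_h\mathbf{p}_E = \mathbf{p}_E$ requires $[\mathbb{P}_k(E)]^2 \subset V_{h,E}$, which is not literally immediate for the componentwise cylindrical-Laplacian space defined here; the paper's own proof implicitly relies on the same inclusion (both in the kernel claim and in assuming $S_E$ vanishes on polynomials), so this is a gap of the shared framework, not of your argument specifically.
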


\begin{proof}
    By definition, for a function  $\mathbf{u} \in H^m(\Omega)$, $I_h\mathbf{u}$ matches $\mathbf{u}$ at the degrees of freedom:
    \begin{itemize}
        \item vertex values:
        \begin{equation}
            I_h\mathbf{u}(V_i) = \mathbf{u}(V_i),
        \end{equation}
        \item edge moments:  
        \begin{equation}
            \int \limits_e r^q \left(I_h\mathbf{u} - \mathbf{u}\right)ds = 0,
        \end{equation}
        with $q=0,...,k-1$,
        \item internal moments:
        \begin{equation}
            \int \limits_E r M \left(I_h\mathbf{u} - \mathbf{u}\right)drdz = 0,
        \end{equation}
        for monomials $M$ up to degree $k-2$.
    \end{itemize}
    This interpolation is computable using the degrees of freedom, leveraging the projection operator.

    By the Minkowski Inequality, it holds true that:
    \begin{equation}\label{eq:aux_1}
        \left\| \mathbf{u} - I_h\mathbf{u} \right\|_{H^l(\Omega)} \leq  \left\| \mathbf{u} - \mathbf{p}\right\|_{H^l(\Omega)} +  \left\| \mathbf{p} - I_h\mathbf{u} \right\|_{H^l(\Omega)},
    \end{equation}
    where $\mathbf{p}\in \mathbb{P}_k(E)$. The goal is to prove that this is finite.

    Since the mesh satisfies the star-shaped condition, for $\mathbf{u} \in H^m(\Omega)$, there exists a polynomial $\mathbf{p} \in \mathbb{P}_k(E)$ such that, by the Bramble-Hilbert Lemma:
    \begin{equation}\label{eq:aux_2}
        \left[ \mathbf{u}-\mathbf{p} \right]_{H^l(\Omega)} \leq Ch^{m-l}\left[ \mathbf{u}\right]_{H^m(\Omega)},
    \end{equation}
    for $0 \leq l \leq m \leq k+1$. Recall that the Sobolev norm is given by:
    \begin{equation}
        \| \mathbf{u} \|_{H^m(E)}^2 = \sum \limits_{|\alpha| \leq m} \| \nabla^{\alpha}\mathbf{u} \|_{L^2(E)}^2 =\| \mathbf{u} \|_{L^2(E)}^2 + \sum \limits_{|\alpha|=1}^{m} \| \nabla^{\alpha}\mathbf{u} \|_{L^2(E)}^2,
    \end{equation}
    and the semi-norm is given by
    \begin{equation}
        \left[ \mathbf{u} \right]_{H^m(E)}^2 =\sum \limits_{|\alpha| = m} \| \nabla^{\alpha}\mathbf{u} \|_{L^2(E)}^2
    \end{equation}
    Thus,
    \begin{equation}
        \left[ \mathbf{u} \right]_{H^m(E)}^2 \leq \| \mathbf{u} \|_{H^m(E)}^2.
    \end{equation}
    Therefore, it is possible to rewrite (\ref{eq:aux_2}) as:
    \begin{equation}\label{eq:aux_5}
        \left[ \mathbf{u}-\mathbf{p} \right]_{H^l(\Omega)} \leq  Ch^{m-l}\left\| \mathbf{u} \right\|_{H^m(E)}
    \end{equation}

    \paragraph{}The next part consists of showing that $\|\mathbf{p}-I_h\mathbf{u} \|_{H^l(E)}$ is finite. Specifically, this term involves the stabilization's effect on non-polynomials. Here, it is considered that the stability term satisfies:
    \begin{equation}
        C_0 a_E(\mathbf{v}_h, \mathbf{v}_h) \leq S_E(\mathbf{v}_h, \mathbf{v}_h) \leq C_1 a_E(\mathbf{v}_h, \mathbf{v}_h)
    \end{equation}
    for all $\mathbf{v}_h \in \ker{\Pi^\nabla}$, and with $C_0,C_1 > 0$ independent of $h_E$. Theorem \ref{theo:stab} shows how the chosen stabilization term satisfies this condition and how to determine the values of the constants $C_0$ and $C_1$. The condition leads to a norm equivalence relation:
    \begin{equation}
        \left[ \mathbf{v}_h \right]_{H^l(E)} \approx \left[ a_E(\Pi^\nabla \mathbf{v}_h, \Pi^\nabla \mathbf{v}_h) + S_E((I-\Pi^\nabla)\mathbf{v}_h, (I-\Pi^\nabla)\mathbf{v}_h)\right]^{1/2},
    \end{equation}
    where $I$ is the identity operator. Since $\Pi^\nabla(\mathbf{p} - I_h\mathbf{u})=0$, then
    \begin{equation}\label{eq:aux_3}
        a_E(\Pi^\nabla(\mathbf{p} - I_h\mathbf{u}), \Pi^\nabla(\mathbf{p} - I_h\mathbf{u})) = 0
    \end{equation}
    Also, the error $\mathbf{p} - I_h\mathbf{u} \in \ker{\Pi^\nabla}$ and its norm are controlled by the stabilization, such that:
    \begin{equation}
        \left[ \mathbf{p} - I_h\mathbf{u} \right]_{H^l(E)} \approx S_E(\mathbf{p} - I_h\mathbf{u}, \mathbf{p} - I_h\mathbf{u}).
    \end{equation}

    To  estimate $S_E(\mathbf{p} - I_h\mathbf{u}, \mathbf{p} - I_h\mathbf{u})$, note that the degrees of freedom of $\mathbf{p} - I_h\mathbf{u}$ correspond to $\mathbf{p}-\mathbf{u}$, since $I_h\mathbf{u}$ matches the $\mathbf{u}$ at the degrees of freedom.  The  boundary degrees  of freedom contribute significantly. Using the trace inequality in the axisymmetric setting, it leads to:
    \begin{equation}
        \int \limits_e 2 \pi r|\mathbf{v}|^2 ds \leq C \left( h_E^{-1} \int \limits_E 2 \pi r |\mathbf{v}|^2 drdz + h_E \int \limits_E 2 \pi r |\nabla \mathbf{v}|^2 drdz\right),
    \end{equation}
    where constant $C>0$ depends on the mesh regularity. Setting $\mathbf{v}=\mathbf{p}-I_h \mathbf{u}$, by the Bramble-Hilbert Lemma:
    \begin{equation}\label{eq:aux_4}
        \begin{split}
            S_E(\mathbf{p} - I_h\mathbf{u}, \mathbf{p} - I_h\mathbf{u}) &\leq Ch_E^{2(m-l)}[\mathbf{u}]^2_{H^m(E)} \Rightarrow\\
            \left[ \mathbf{p} - I_h\mathbf{u} \right]^2_{H^l(E)} &\leq C h_E^{2(m-l)}[\mathbf{u}]^2_{H^m(E)}  \Rightarrow \\
            \left[ \mathbf{p} - I_h\mathbf{u} \right]_{H^l(E)} &\leq C h_E^{m-l}[\mathbf{u}]^2_{H^m(E)}.
        \end{split}
    \end{equation}
    So, from (\ref{eq:aux_5}) and (\ref{eq:aux_4}) in (\ref{eq:aux_1}):
    \begin{equation}
        \begin{split}
             \left[ \mathbf{u} - I_h\mathbf{u} \right]_{H^l(E)} &\leq  \left[ \mathbf{p} - \mathbf{u} \right]_{H^l(E)} +  \left[ \mathbf{p} - I_h\mathbf{u} \right]_{H^l(E)} \leq \\
             &\leq Ch_E^{m-l} [\mathbf{u}]_{H^m(E)} + Ch_E^{m-l} [\mathbf{u}]_{H^m(E)} = \\
             &= 2Ch_E^{m-l} [\mathbf{u}]_{H^m(E)}.
        \end{split}
    \end{equation}
    Summing over all elements
    \begin{equation}
        \left[ \mathbf{u} - I_h\mathbf{u} \right]_{H^l(\Omega)}^2 = \sum \limits_{E \in \mathcal{T}_h} \left[ \mathbf{u} - I_h\mathbf{u} \right]_{H^l(E)}^2 \leq C \sum \limits_{E \in \mathcal{T}_h}h_E^{2(m-l)}[\mathbf{u}]^2_{H^m(E)}.
    \end{equation}
    For a quasi-uniform mesh ($h_E \approx h$):
    \begin{equation}
        \left[ \mathbf{u} - I_h\mathbf{u} \right]_{H^l(\Omega)}  \leq C h^{m-l}[\mathbf{u}]_{H^m(\Omega)}.
    \end{equation}
\end{proof}

\paragraph{}
Theorem \ref{theo:stab} establishes that the stabilization term $S_E(\cdot, \cdot) $ introduced in this work is equivalent, up to positive constants independent of the mesh size, to the weighted $H^1$-seminorm on the subspace $\ker \Pi^\nabla$. This result is crucial because it guarantees that the stabilization is properly scaled: it penalizes exactly the components of the virtual displacement field that are not controlled by the consistency (polynomial) term, without dominating or vanishing too rapidly as the mesh is refined. In other words, $S_E(\mathbf{v}_h, \mathbf{v}_h)$ behaves like a weighted energy norm for the non-polynomial part of the solution, ensuring that the discrete bilinear form remains coercive and stable even in the presence of spurious modes.

\paragraph{}
This property is essential for the theoretical foundations of the Virtual Element Method, particularly for deriving interpolation error estimates and for proving convergence of the discrete solution to the exact solution. Without such a stabilization bound, the discrete system could suffer from rank deficiency or ill-conditioning, compromising the accuracy and robustness of the method. The proof relies on standard tools such as trace inequalities, Poincaré inequalities, and scaling arguments adapted to the weighted, axisymmetric setting, emphasizing that the stabilization respects the physical structure of the problem while maintaining full compatibility with the Virtual Element discretization.

\begin{theo}\label{theo:stab}
    For all $\mathbf{v}_h\in \ker{\Pi^\nabla}$, there exist positive constants $C_0$ and $C_1$, independent of $h_E$, such that:
    \begin{equation}
        C_0 a_E(\mathbf{v}_h, \mathbf{v}_h) \leq S_E(\mathbf{v}_h, \mathbf{v}_h) \leq C_1 a_E(\mathbf{v}_h, \mathbf{v}_h)
    \end{equation}
    where
    \begin{equation}\label{eq:weighted_seminorm}
        a_E(\mathbf{v}_h, \mathbf{v}_h) = \int \limits_E r \left| \nabla \mathbf{v}_h \right|^2 drdz
    \end{equation}
    is the weighted $H^1$-seminorm on element $E$.
\end{theo}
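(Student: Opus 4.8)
The plan is to prove the two-sided bound by reducing it, via scaling, to an equivalence of norms on a fixed finite-dimensional space, while carefully controlling the radial weight $r$. First I would simplify the stabilization on the kernel: since $\mathbf{v}_h \in \ker\Pi^\nabla$ we have $\Pi^\nabla\mathbf{v}_h = 0$, so with $\tau = 1$ the term collapses to $S_E(\mathbf{v}_h,\mathbf{v}_h) = h_E^{-1}\sum_{e\in\partial E}\int_e 2\pi r\,|\mathbf{v}_h|^2\,ds$, a pure weighted boundary $L^2$-functional. Crucially, the zero-mean condition defining $\Pi^\nabla$ forces $P_0(\mathbf{v}_h)=0$; this normalization is exactly what makes a Poincaré inequality available in the sequel.

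For the upper bound $S_E \leq C_1 a_E$, I would apply the weighted trace inequality already invoked in the proof of Theorem~\ref{theo:interpolation} edge-by-edge, obtaining $\int_e 2\pi r\,|\mathbf{v}_h|^2\,ds \leq C\bigl(h_E^{-1}\int_E 2\pi r\,|\mathbf{v}_h|^2\,drdz + h_E \int_E 2\pi r\,|\nabla\mathbf{v}_h|^2\,drdz\bigr)$. Summing over edges and multiplying by $h_E^{-1}$ produces a term $h_E^{-2}\int_E r\,|\mathbf{v}_h|^2$, which I absorb using a weighted Poincaré inequality $\int_E r\,|\mathbf{v}_h|^2\,drdz \leq C_P\, h_E^{2} \int_E r\,|\nabla\mathbf{v}_h|^2\,drdz$, valid precisely because the vertex mean vanishes. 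This yields $C_1$ depending only on the trace and Poincaré constants, hence on the mesh regularity parameter $\gamma$ and on $r_0$.

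The lower bound $C_0 a_E \leq S_E$ is the harder direction. Here I would exploit discrete harmonicity: each component of $\mathbf{v}_h$ solves the axisymmetric Laplace problem with piecewise-linear boundary data, so $\mathbf{v}_h$ is the minimum-energy extension of its trace. Rather than manipulate the normal derivative directly, I prefer a finite-dimensional equivalence argument. Rescale $E$ to a reference element $\hat E$ of unit diameter; on the finite-dimensional space $\ker\Pi^\nabla|_{\hat E}$ both $a_{\hat E}^{1/2}$ and $S_{\hat E}^{1/2}$ are genuine norms: $a_{\hat E}(\mathbf{v},\mathbf{v})=0$ forces $\mathbf{v}$ constant and then zero mean forces $\mathbf{v}=0$, while $S_{\hat E}(\mathbf{v},\mathbf{v})=0$ forces $\mathbf{v}|_{\partial E}=0$ and then harmonicity forces $\mathbf{v}=0$. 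Equivalence of norms on a finite-dimensional space gives uniform constants, and transferring back works because both functionals scale homogeneously in $h_E$ with the explicit $h_E^{-1}$ prefactor of $S_E$ supplying exactly the matching power.

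The chief difficulty, and the step requiring the most care, is that the weight $r$ destroys exact scale invariance: on the reference element the radial coordinate stays pinned to the element's physical distance from the axis. I would handle this using that $\Omega$ is bounded away from the axis, $r \geq r_0 > 0$, together with the bound $r \leq \bar r_E + C h_E$ across an element of diameter $h_E$, so that $r = \bar r_E\bigl(1 + O(h_E/r_0)\bigr)$ is nearly constant on each $E$. Factoring out the element-mean radius $\bar r_E$ reduces all four integrals to their unweighted counterparts up to multiplicative factors bounded above and below independently of $h_E$; the weight therefore enters only the constants $C_0, C_1$ and not their $h$-uniformity. Assembling the two directions then yields the stated equivalence, with $C_0$ and $C_1$ depending only on $\gamma$, $r_0$, and the polynomial degree.
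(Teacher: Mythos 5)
Your proposal is correct in substance and, for the upper bound, coincides with the paper's own argument: the same simplification of $S_E$ on $\ker\Pi^\nabla$, the same weighted trace inequality applied edge by edge, and the same Poincar\'e inequality made available by the vanishing mean, giving $C_1$ in terms of the trace and Poincar\'e constants and the bounded number of edges. For the lower bound, however, you take a genuinely different and tighter route. The paper rescales to a reference element, proves an inverse estimate and a trace bound for \emph{polynomials} in $\mathbb{P}_k(\hat E)$, and then transfers these to VEM functions by asserting that for $\mathbf{v}_h \in \ker\Pi^\nabla$ the boundary value ``dominates its energy function'', i.e.\ $a_E(\mathbf{v}_h,\mathbf{v}_h)\leq C_I\, h_E^{-1}\sum_{e\in\partial E}\int_e r\,|\mathbf{v}_h|^2\,ds$ --- an inequality that is stated rather than derived; moreover, the paper's scaling computations drop the weight $r$ and reinsert it only at the end. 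You instead argue directly on the kernel itself: you verify that both $a^{1/2}$ and $S^{1/2}$ are genuine norms on the rescaled kernel (zero energy plus zero mean forces $\mathbf{v}=0$; zero trace plus harmonicity forces $\mathbf{v}=0$), invoke equivalence of norms there, and make the treatment of the radial weight explicit via $r=\bar r_E\bigl(1+O(h_E/r_0)\bigr)$, so the weight enters only the constants. This buys a self-contained justification of precisely the inequality the paper assumes, and a cleaner bookkeeping of the weight. Two caveats, both of which your proof shares with the paper's: first, the rescaled element $\hat E = E/h_E$ is not a single fixed reference element --- its shape varies with $E$ --- so finite-dimensional norm equivalence by itself yields shape-dependent constants, and uniformity in $h_E$ requires an additional compactness argument over the class of unit-diameter polygons that are star-shaped with parameter $\gamma$ and have boundedly many vertices (or the virtual-triangulation machinery of Chen and Huang, which the paper cites but does not deploy); second, your step ``zero trace plus harmonicity implies $\mathbf{v}=0$'' is valid for $k=1$, where the local space is harmonic with all degrees of freedom on the boundary, but fails for $k\geq 2$, where $\ker\Pi^\nabla$ can contain interior bubble functions with vanishing trace that a purely boundary-based $S_E$ cannot detect --- so the lower bound, and hence the theorem as stated for general $k$, actually holds only in the $k=1$ setting; your argument makes this restriction visible, whereas the paper's proof glosses over it.
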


\begin{proof}
    Since $\mathbf{v}_h \in \ker{\Pi^\nabla}$, then $\Pi^\nabla\mathbf{v}_h = \mathbf{0}$. So,
    \begin{equation}
        \mathbf{v}_h - \Pi^\nabla \mathbf{v}_h = \mathbf{v}_h,
    \end{equation}
    and the stabilization term simplifies to:
    \begin{equation}
        S_E(\mathbf{v}_h, \mathbf{v}_h ) = h_E^{-1} \sum \limits_{e \in E} \int \limits_e 2\pi r |\mathbf{v}_h|^2ds.
    \end{equation}

    To prove the upper bound $S_E(\mathbf{v}_h ,\mathbf{v}_h )\leq C_1 a_E(\mathbf{v}_h , \mathbf{v}_h )$, it is necessary to show that the boundary integral is finite using a trace inequality. By the trace inequality, for each edge $e \in \partial E$, there exists a constant $C_T > 0$ depending only on the mesh regularity, such that:
    \begin{equation}\label{eq:trace_ineq}
        \int \limits_e 2 \pi r|\mathbf{v}_h|^2 ds \leq C \left( h_E^{-1} \int \limits_E 2 \pi r |\mathbf{v}_h|^2 drdz + h_E \int \limits_E 2 \pi r |\nabla \mathbf{v}_h|^2 drdz\right).
    \end{equation}

    Since $\mathbf{v}_h \in \ker{\Pi^\nabla}$, it is orthogonal to polynomials of degree less than or equal to $k$, including constants. Under the start-shaped assumption, the Poincaré inequality holds:
    \begin{equation}\label{eq:poincare_ineq}
        \int \limits_E r |\mathbf{v}_h|^2 drdz \leq C_p h_E^2 \int \limits_E r |\nabla \mathbf{v}_h|^2 drdz,
    \end{equation}
    where $C_p>0$ depends on $\gamma > 0$ and the polynomial degree $k$, but not on $h_E$.

    Substituting (\ref{eq:weighted_seminorm}) and (\ref{eq:poincare_ineq}) in (\ref{eq:trace_ineq}):
    \begin{equation}
        \begin{split}
            \int \limits_e r|\mathbf{v}_h|^2 drdz &\leq C_T \left( h_E^{-1} C_p h_E^2 a_E(\mathbf{v}_h,\mathbf{v}_h) + h_E a_E(\mathbf{v}_h, \mathbf{v}_h) \right) = \\
            &= C_T(C_P h_E + h_E)a_E(\mathbf{v}_h, \mathbf{v}_h) = \\
            &= C_T h_E (C_P+1) a_E(\mathbf{v}_h, \mathbf{v}_h).
        \end{split}
    \end{equation}

    Sum over all edges $e \in \partial E$, where $N_e$ (the number of edges of $E$) is bounded due to the mesh regularity: 
    \begin{equation}\label{eq:aux_9}
        \sum \limits_{e \in \partial E} \int \limits_e r|\mathbf{v}_h|^2ds \leq C_T h_E (C_P + 1)N_e a_E(\mathbf{v}_h, \mathbf{v}_h).
    \end{equation}
    Substituting into $S_E$:
    \begin{equation}
        \begin{split}
            S_E(\mathbf{v}_h, \mathbf{v}_h) &= h_E^{-1}\sum \limits_{e \in \partial  E} \int \limits_e 2 \pi r |\mathbf{v}_h|^2ds \leq h_E^{-1}2 \pi C_T h_E (C_P+1) N_e a_E(\mathbf{v}_h, \mathbf{v}_h) = \\
            &= 2 \pi C_T (C_P + 1)N_e a_E(\mathbf{v}_h, \mathbf{v}_h).
        \end{split}
    \end{equation}
    The upper bound holds with
    \begin{equation}
        C_1 = 2 \pi C_T(C_p+1)N_e,
    \end{equation}
    a positive constant independent of $h_E$.

    \paragraph{}To derive the lower boundary, the strategy is to derive a simplified bound and adapt it to the VEM context. A general finite element space is considered, then it is specialized to the VEM space $V_{h,E}\cap \ker{\Pi^\nabla}$.

    \paragraph{}Consider a reference element $\hat{E}$ and a finite dimensional space $\mathbb{P}_k(\hat{E})$ of polynomials of degree less or equal to $k$. For $\hat{\mathbf{v}} \in \mathbb{P}_E(\hat{E})$, an inverse estimate relates the $H^1$-seminorm to the $L^2$-norm:
    \begin{equation}\label{eq:aux_6}
        \left[\hat{\mathbf{v}}\right]_{H^1(\hat{E})} \leq C_k \left\| \hat{\mathbf{v}} \right\|_{L^2(E)},
    \end{equation}
    where $C_k>0$ depends on $k$ but not on the mesh size. This follows from the finite dimensionality of $\mathbb{P}_k(\hat{E})$, i.e. all norms are equivalent on a finite-dimensional space, and the constant $C_k$ arises from the equivalence between $H^1$-seminorm and $L^2$-norm.

    Now, $\hat{E}$ is mapped to a physical element $E$ of a diameter $h_E$ via an affine transformation $\mathbf{F}: \hat{E} \longrightarrow E$, with
    \begin{equation}
        \mathbf{x} = F(\hat{\mathbf{x}}) = h_E \hat{\mathbf{x}} + \mathbf{b}.
    \end{equation}
    For $\mathbf{v} \in \mathbb{P}_k(E)$, define $\hat{\mathbf{v}} = \mathbf{v} \circ F$, so $\hat{\mathbf{v}} \in \mathbb{P}_k(\hat{E})$. The scaling is computed using the $L^2$-norm:
    \begin{equation}\label{eq:aux_7}
        \| \mathbf{v} \|^2_{L^2(E)} = \int \limits_E |\mathbf{v}|^2 d\mathbf{x} = \int \limits_{\hat{E}} \left| \hat{\mathbf{v}} \right|^2 \left| det(\nabla \mathbf{F}) \right|d\hat{\mathbf{x}} = h_E^2 \int \limits_E \left| \hat{\mathbf{v}} \right| d \hat{\mathbf{x}} = h_E^2 \| \hat{\mathbf{v}} \|_{L^2(\hat{E})}.
    \end{equation}

    It is known that 
    \begin{equation}
        \nabla \mathbf{v} = \left( \nabla \hat{\mathbf{v}} \right) \left( \nabla \mathbf{F} \right)^{-1},
    \end{equation}
    and
    \begin{equation}
        \left| \nabla \mathbf{v} \right| = \frac{1}{h_E} \left| \nabla \hat{\mathbf{v}} \right|,
    \end{equation}
    with
    \begin{equation}
        \left( \nabla\mathbf{F} \right)^{-1} = \frac{1}{h_E} \mathbf{I}.
    \end{equation}
    The scaling with the $H^1$-seminorm is:
    \begin{equation}\label{eq:aux_8}
        [\mathbf{v}]^2_{H^l(E)} = \int \limits_E \left| \nabla \mathbf{v} \right|^2 d\mathbf{x} = \int \limits_{\hat{E}} \frac{1}{h_E^2} \left| \nabla \mathbf{v} \right|^2 h_E^2 d\hat{\mathbf{x}} = \left[ \hat{\mathbf{v}} \right]^2_{H^l(\hat{E})}.
    \end{equation}

    From (\ref{eq:aux_7}) and (\ref{eq:aux_8}) in (\ref{eq:aux_6}): 

    \begin{equation}
        \begin{split}
            \left[ \hat{\mathbf{v}} \right]_{H^l(\hat{E})} &\leq C_k \left\| \hat{\mathbf{v}}\right\|_{L^2(\hat{E})} \Rightarrow \\
        [\mathbf{v}]_{H^l(E)} = \left[ \hat{\mathbf{v}} \right]_{H^l(\hat{E})} &\leq C_k \| \|_{L^2(\hat{E})} = C_k h_E^{-1} \| \mathbf{v} \|_{L^2(E)} \Rightarrow \\
        [\mathbf{v}]^2_{H^l(E)} &\leq C_k^2 h_E^{-2} \| \mathbf{v} \|_{L^2(E)}.
        \end{split}
    \end{equation}

    Now, relate the $H^1$-seminorm to the boundary $L^2$-norm. Applying the trace inequality on the reference element $\hat{E}$:
    \begin{equation}
        \int \limits_{\partial \hat{E}} \left| \mathbf{v} \right|^2 d\hat{s} \leq C_T \left( \int \limits_{\hat{E}} \left| \hat{\mathbf{v}}\right|^2 d \hat{\mathbf{v}} + \int \limits_{\hat{E}} \left| \nabla \hat{\mathbf{v}} \right|^2 d \hat{\mathbf{x}} \right).
    \end{equation}
    By the inverse estimate, on $\hat{E}$,
    \begin{equation}
        \int \limits_{\hat{E}} \left| \nabla \hat{\mathbf{v}} \right|^2 d \hat{\mathbf{x}}\leq C_k^2\int \limits_{\hat{E}} \left| \hat{\mathbf{v}} \right|^2 d \hat{\mathbf{x}}.
    \end{equation}
    Then,
    \begin{equation}
        \int \limits_{\partial \hat{E}} \left| \hat{\mathbf{v}} \right| d \hat{s} \leq C_T \left( 1 + C_k^2 \right) \int \limits_{\hat{E}} \left| \hat{\mathbf{v}} \right|^2 d \hat{\mathbf{x}}. 
    \end{equation}
    Mapping to $E$, where $ds = h_E h \hat{s}$ and $|\det{\nabla \mathbf{F}}|=h_E^2$:
    \begin{equation}
        \int \limits_{\partial \hat{E}} \left|  \hat{\mathbf{v}} \right|^2 d\hat{s} = h_E^{-1}\int \limits_{\partial E} |\mathbf{v}|^2 ds.  
    \end{equation}
    and
    \begin{equation}
        \int \limits_{\hat{E}} \left| \hat{\mathbf{v}} \right|^2d\hat{\mathbf{x}} = h_E^{-2} \int \limits_E |\mathbf{v}|^2 d\mathbf{x}.
    \end{equation}
    Then,
    \begin{equation}
        \int \limits_{\partial E} |\mathbf{v}|^2 ds \leq C_T \left( 1+C_k^2 \right) h_E^{-1} \int \limits_E |\mathbf{v}|^2d \mathbf{x}
    \end{equation}
    and
    \begin{equation}
        \int \limits_E |\mathbf{v}|^2 d \mathbf{x} \leq C_P h_E^2 \int \limits_E |\nabla \mathbf{v}|^2 d \mathbf{x}.
    \end{equation}

    Finally, 
    \begin{equation}\label{eq:aux_10}
        \begin{split}
            \int \limits_{\partial E} |\mathbf{v}|^2 ds &\leq C_T \left( 1+C_k^2 \right) \int \limits_E |\nabla \mathbf{v}|^2 d\mathbf{x}\Rightarrow  \\
            \int \limits_E |\nabla \mathbf{v}|^2 d\mathbf{x} &\leq \frac{1}{C_T \left( 1+C_k^2\right)C_P h_E} \int \limits_{\partial E} |\mathbf{v}|^2 ds
        \end{split}
    \end{equation}

    \paragraph{}In VEM, $\mathbf{v}_h \in \ker{\Pi^\nabla}$ is not polynomial but satisfies $\Delta \mathbf{v}_h \in \mathbb{P}_{k-2}(E)$, and its boundary traces are piecewise polynomials of degree less or equal to $k$. The inverse estimate in VEM relates the $H^1$-seminorm to boundary terms, leveraging the structure of the space.

    Consider $\mathbf{v}_h \in \ker{\Pi^\nabla}$ such that its boundary value dominates its energy function. Using (\ref{eq:aux_9}) and (\ref{eq:aux_10}), it holds that
    \begin{equation}
        a_E(\mathbf{v}_h, \mathbf{v}_h) = \int \limits_E r |\nabla \mathbf{v}_h|^2 drdz \leq C_I h_E^{-1} \sum \limits_{e \in \partial E} \int \limits_e r |\mathbf{v}_h|^2 ds,
    \end{equation}
    where $C_I$ depends on $k$, $N_e$ and mesh regularity but not on $h_E$. Thus,
    \begin{equation}
        S_E(\mathbf{v}_h, \mathbf{v}_h) = h_E^{-1} \sum \limits_{e \in \partial E} \int \limits_E 2\pi r |\mathbf{v}_h|^2 ds \geq \frac{2 \pi}{C_I} a_E(\mathbf{v}_h, \mathbf{v}_h).
    \end{equation}
    The lower bound holds with 
    \begin{equation}
        C_0 = \frac{2 \pi}{C_I}.
    \end{equation}
\end{proof}

\subsection{Approximation error estimate}

\paragraph{}Suppose the exact solution $\mathbf{u}\in H^m(\Omega)\cap H^1_0(\Omega)$, with $m>1$, solves the axisymmetric problem:
    \begin{equation}\label{eq:continuous_problem}
        a(\mathbf{u}, \mathbf{v}) = (\mathbf{f}, \mathbf{v}),
    \end{equation}
for all $\mathbf{v} \in H^1_0(\Omega)$, where
    \begin{equation}
        a(\mathbf{u}, \mathbf{v}) = \int \limits_\Omega r \nabla \mathbf{u} \cdot \nabla \mathbf{v} drdz
    \end{equation}
and
    \begin{equation}
        (\mathbf{f}, \mathbf{v}) = \int \limits_\Omega r \mathbf{f}\mathbf{v}drdz,
    \end{equation}
    with $r \geq r_0 > 0$. Let $\mathbf{u}_h \in V_h$ be a VEM solution satisfying:
    \begin{equation}\label{eq:discrete_problem}
        a_h(\mathbf{u}_h, \mathbf{v}_h) = \langle \mathbf{f}_h, \mathbf{v}_h \rangle,
    \end{equation}
for all $\mathbf{v}_h \in V_h$, where $\langle \mathbf{f}_h, \mathbf{v}_h \rangle$ approximates $(\mathbf{f}, \mathbf{v}_h)$ using the $L^2$-projection of $\mathbf{f}$ onto $\mathbb{P}_{k-2}(E)$. 

\paragraph{}In VEM, the discrete bilinear form $a_h(\cdot, \cdot )$ differs from $a(\cdot, \cdot)$, and the right-hand side $\langle \mathbf{f}_h, \mathbf{v}_h \rangle$ is an approximation of $(\mathbf{f}, \mathbf{v}_h)$. Specifically, $a_h(\mathbf{u}_h, \mathbf{v}_h)$ includes the stabilization term $S_E$, which is zero for polynomials but non-zero for general VEM functions, and $\langle \mathbf{f}_h, \mathbf{v}_h \rangle \neq (\mathbf{f}, \mathbf{v}_h)$ due to the numerical projection. Those aspects lead to a consistency error, meaning the discrete equation does not exactly hold for the exact solution $\mathbf{u}$:
\begin{equation}
    a_h(\mathbf{u}, \mathbf{v}_h) \neq (\mathbf{f}, \mathbf{v}_h).
\end{equation}
To account for this, the classic Céa's Lemma is modified to include a consistency term, which leads to a supremum term.

\paragraph{} In particular, even the interpolation \( I_h \mathbf{u} \) of the exact solution, which exactly matches the degrees of freedom, does not satisfy the discrete problem formulated by the VEM. This phenomenon arises from the intrinsic differences between the discrete bilinear form and the continuous one, as well as the approximation introduced in the loading term. Understanding this inconsistency is crucial because it directly impacts the derivation of error estimates: it implies that standard results like Céa’s Lemma must be adapted to account for a consistency error. The following theorem formalizes this observation and provides the necessary foundation for analyzing the convergence behavior of the Virtual Element Method.

\begin{theo}
    The interpolation $I_h \mathbf{u}$ does not satisfy the discrete equation exactly:
    \begin{equation}
        a(I_h \mathbf{u}, \mathbf{v}_h) \neq \langle \mathbf{f}_h, \mathbf{v}_h \rangle,
    \end{equation}
    for all $\mathbf{v}_h \in V_h$.
\end{theo}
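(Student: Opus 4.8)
The plan is to read the assertion as a statement about the VEM \emph{consistency error}: the inequality cannot be meant to hold for every individual $\mathbf{v}_h$ (it fails trivially at $\mathbf{v}_h=\mathbf{0}$, where both sides vanish), so I would interpret it as the claim that the identity $a(I_h\mathbf{u},\mathbf{v}_h)=\langle \mathbf{f}_h,\mathbf{v}_h\rangle$ \emph{does not hold simultaneously for all} $\mathbf{v}_h\in V_h$. The natural approach is to compare $I_h\mathbf{u}$ against the exact solution $\mathbf{u}$ and the exact load $(\mathbf{f},\mathbf{v}_h)$, isolating the one contribution that genuinely vanishes.

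First I would write, for any $\mathbf{v}_h\in V_h\subset [H^1_0(\Omega)]^2$, the algebraic identity
\begin{equation}
    a(I_h\mathbf{u},\mathbf{v}_h)-\langle \mathbf{f}_h,\mathbf{v}_h\rangle
    = a(I_h\mathbf{u}-\mathbf{u},\mathbf{v}_h)
    + \bigl[a(\mathbf{u},\mathbf{v}_h)-(\mathbf{f},\mathbf{v}_h)\bigr]
    + \bigl[(\mathbf{f},\mathbf{v}_h)-\langle \mathbf{f}_h,\mathbf{v}_h\rangle\bigr].
\end{equation}
Since $\mathbf{u}$ solves the continuous weak problem \eqref{eq:continuous_problem} and $\mathbf{v}_h$ is an admissible test function, the middle bracket is identically zero, leaving the \emph{interpolation residual} $a(I_h\mathbf{u}-\mathbf{u},\mathbf{v}_h)$ and the \emph{load-projection residual} $(\mathbf{f},\mathbf{v}_h)-\langle \mathbf{f}_h,\mathbf{v}_h\rangle$.

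Next I would argue that neither residual vanishes generically and that nothing forces their sum to zero for all $\mathbf{v}_h$. For the interpolation residual, the point is that $I_h\mathbf{u}=\mathbf{u}$ would require $\mathbf{u}\in V_h$; but a generic $\mathbf{u}\in H^m(\Omega)$ is neither componentwise harmonic on each $E$ nor piecewise affine on $\partial E$, so $I_h\mathbf{u}-\mathbf{u}\not\equiv\mathbf{0}$, and Theorem~\ref{theo:interpolation} only bounds this difference by $O(h^{m-l})$ rather than annihilating it. For the load-projection residual, $\langle \mathbf{f}_h,\mathbf{v}_h\rangle$ replaces $\mathbf{f}$ by its $L^2$-projection onto $\mathbb{P}_{k-2}(E)$, so it differs from $(\mathbf{f},\mathbf{v}_h)$ whenever $\mathbf{f}\notin \mathbb{P}_{k-2}(E)$; this term alone already supplies a test function witnessing the inequality for generic data.

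The step I expect to be the main obstacle is turning this ``generically nonzero'' observation into a clean non-identity, since one must rule out an accidental cancellation between the two residuals. The sharpest route is by contradiction through well-posedness: if the identity held for all $\mathbf{v}_h$, then $I_h\mathbf{u}$ would solve a problem with the same right-hand side as the discrete scheme \eqref{eq:discrete_problem}, and coercivity (via the norm equivalence of Theorem~\ref{theo:stab}) would force $I_h\mathbf{u}$ to coincide with the discrete solution $\mathbf{u}_h$. The delicate bookkeeping here is the distinction between the continuous form $a(\cdot,\cdot)$ appearing in the statement and the discrete form $a_h(\cdot,\cdot)$ defining $\mathbf{u}_h$; reconciling them requires controlling the stabilization gap on $I_h\mathbf{u}-\Pi^\nabla I_h\mathbf{u}$. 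Since $I_h\mathbf{u}$ is characterized by matching the degrees of freedom of $\mathbf{u}$ while $\mathbf{u}_h$ is the energy projection, the two agree only in degenerate cases (polynomial data with $\mathbf{f}\in\mathbb{P}_{k-2}(E)$), so for generic data the assumed identity is untenable.
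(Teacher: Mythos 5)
Your proposal is correct in substance but takes a genuinely different route from the paper. The paper works throughout with the \emph{discrete} form $a_h$ (the $a$ in the theorem statement is effectively a typo): it writes $I_h\mathbf{u}=\mathbf{u}-(\mathbf{u}-I_h\mathbf{u})$, observes that $\Pi^\nabla(\mathbf{u}-I_h\mathbf{u})=\mathbf{0}$ because interpolant and exact solution share degrees of freedom, so the consistency part of $a_h(\mathbf{u}-I_h\mathbf{u},\mathbf{v}_h)$ drops and the entire residual reduces to the stabilization contributions $\sum_{E}S_E\left(\left(I-\Pi^\nabla\right)\mathbf{u},\left(I-\Pi^\nabla\right)\mathbf{v}_h\right)$ plus the load-projection error $(\mathbf{f},\mathbf{v}_h)-\langle\mathbf{f}_h,\mathbf{v}_h\rangle$, each then bounded quantitatively by $Ch^{m}\|\mathbf{u}\|_{H^m(\Omega)}$ and $Ch^{k}\|\mathbf{f}\|_{H^{k-1}(\Omega)}$ respectively. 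You instead keep the continuous form $a$, annihilate the middle bracket via the continuous variational equation, and are left with the interpolation residual $a(I_h\mathbf{u}-\mathbf{u},\mathbf{v}_h)$ plus the same load term, and you then confront the possible cancellation between the two residuals head-on with a coercivity/uniqueness contradiction. What each approach buys: the paper's decomposition pinpoints the stabilization term as the precise structural source of the inconsistency, and its quantitative bounds are reused directly in the supremum term of the subsequent a priori estimate (Theorem~\ref{theo:approx_error}); your argument is more elementary, is faithful to the statement as literally written (including your correct observation that the quantifier must be read as negating the identity on all of $V_h$, since $\mathbf{v}_h=\mathbf{0}$ trivially equalizes both sides --- a point the paper ignores), and it explicitly addresses the cancellation issue that the paper's ``generically nonzero'' reasoning glosses over. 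One caveat: your closing contradiction step, identifying $I_h\mathbf{u}$ with $\mathbf{u}_h$, requires closing the $a$-versus-$a_h$ gap before coercivity applies, which you flag but do not carry out; note, however, that the paper itself never proves strict non-vanishing either --- it only exhibits and bounds the residual terms --- so your sketch is at no lower a level of rigor than the original proof.
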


\begin{proof}
    The interpolation $I_h \mathbf{u}$ can be expressed in terms of $\mathbf{u}$:
    \begin{equation}
        I_h \mathbf{u} = u - (\mathbf{u} -  I_h \mathbf{u}).
    \end{equation}
    Substituting into the bilinear form:
    \begin{equation}
        a_h(I_h \mathbf{u}, \mathbf{v}_h) = a_h(\mathbf{u}-(I_h\mathbf{u}), \mathbf{v}_h) = a_h(\mathbf{u}, \mathbf{v}_h) - a_h(\mathbf{u}-I_h \mathbf{u}, \mathbf{v}_h).
    \end{equation}
    Since the discrete problem for $\mathbf{u}_h$ is given by equation (\ref{eq:discrete_problem}), the residual becomes:
    \begin{equation}
        a_h(I_h \mathbf{u}, \mathbf{v}_h) - \langle \mathbf{f}_h, \mathbf{v}_h \rangle = a_h(\mathbf{u}, \mathbf{v}_h) - a_h(\mathbf{u}-I_h  \mathbf{u}, \mathbf{v}_h) - \langle \mathbf{f}_h, \mathbf{v}_h \rangle.
    \end{equation}

    The continuous problem is given in (\ref{eq:continuous_problem}). The discrete bilinear form $a_h(\cdot, \cdot)$ differs from $a(\cdot, \cdot)$:
    \begin{equation}
        a_h(\mathbf{u}, \mathbf{v}_h) = \sum \limits_{E  \in \mathcal{T}_h}\left[ a_E\left(\Pi^\nabla \mathbf{u}, \Pi^\nabla \mathbf{v}_h\right) +S_E\left(\left(I-\Pi^\nabla\right)\mathbf{u}, \left(I-\Pi^\nabla\right)\mathbf{v}_h\right)\right].
    \end{equation}
    The projection $\Pi^\nabla \mathbf{u}$ approximates $\mathbf{u}$ but $\mathbf{u} \notin V_h$ in general, so  $\left( I - \Pi^\nabla \right)\mathbf{u} \neq \mathbf{0}$. The stabilization term is non-zero because $\mathbf{u}$ has a non-polynomial part, and $\mathbf{v}_h$ may also.  Thus,
    \begin{equation}
        a_h(\mathbf{u}, \mathbf{v}_h) \neq a(\mathbf{u}, \mathbf{v}_h) = (\mathbf{f}, \mathbf{v}_h).
    \end{equation}
    due to the stabilization term.

    Since $\mathbf{u} - I_h  \mathbf{u}$ is the interpolation error:
    \begin{equation}\label{eq:aux_11}
        a_h(\mathbf{u}-I_h \mathbf{u}, \mathbf{v}_h) = \sum \limits_{E \in \mathcal{T}_h}\left[ a_E\left( \Pi^\nabla(\mathbf{u}-I_h \mathbf{u}), \Pi^\nabla \mathbf{v}_h \right) + S_E \left( \left(  I-\Pi^\nabla \right)(\mathbf{u}-I_h \mathbf{u}), \left(  I-\Pi^\nabla \right)(\mathbf{u}-I_h \mathbf{v}_h) \right)\right].
    \end{equation}
    Note that $\Pi^\nabla I_h \mathbf{u} = \Pi^\nabla \mathbf{u}$ because  $I_h \mathbf{u}$ matches $\mathbf{u}$ at the degrees of freedom. Thus, 
    \begin{equation}\label{eq:aux_12}
        \Pi^\nabla (\mathbf{u} - I_h \mathbf{u}) = \mathbf{0}.
    \end{equation}
    Consequently, the first term in (\ref{eq:aux_11}) vanishes:
    \begin{equation}\label{eq:aux_13}
        a_E\left( \Pi^\nabla (\mathbf{u}-I_h \mathbf{u}), \Pi^\nabla \mathbf{v}_h \right)=0.
    \end{equation}
    Also, the argument regarding the stabilization term becomes
    \begin{equation}
        \left( I - \Pi^\nabla \right)\left( \mathbf{u} - I_h \mathbf{u} \right) = \left( \mathbf{u} - \Pi^\nabla \mathbf{u}\right) - \left( I_h \mathbf{u} - \Pi^\nabla \mathbf{u} \right) = \mathbf{u} - I_h \mathbf{u}.
    \end{equation}
    \paragraph{}From (\ref{eq:aux_12}) and (\ref{eq:aux_13}) in (\ref{eq:aux_11}):
    \begin{equation}
        a_E (\mathbf{u}-I_h \mathbf{u}, \mathbf{u}_h) = \sum \limits_{E \in \mathcal{T}_h} S_E \left(\mathbf{u} - I_h \mathbf{u}, \left( I - \Pi^\nabla \right) \mathbf{v}_h\right)
    \end{equation}
    By the continuity of the stabilization term:
    \begin{equation}\label{eq:stab_cont}
        \left| S_E \left(\mathbf{u} - I_h \mathbf{u}, \left( I - \Pi^\nabla \right) \mathbf{v}_h\right) \right| \leq C_S \| \mathbf{u} - I_h \mathbf{u}\|_{H^l(E)} \left\| (I-\Pi^\nabla) \mathbf{v}_h \right\|_{H^l(E)}.
    \end{equation}
    For $\mathbf{u}\in H^m(E)$, and considering the regularity of the mesh, by Bramble-Hilbert Lemma, the interpolation error is given by:
    \begin{equation}\label{eq:aux_14}
        \| \mathbf{u} - I_h \mathbf{u}\|_{H^l(E)} \leq C_I h_E^{m-1} \| \mathbf{u}\|_{H^m(E)},
    \end{equation}
    with $C_I > 0$ not depending on $h_E$. Because $\mathbf{v}_h \in V_h$ is piecewise in $H^1(\Omega)$ and $\Pi^\nabla$ is $H^1$-stable:
    \begin{equation}\label{eq:aux_15}
        \left\| \left( I-\Pi^\nabla \right)\mathbf{v}_h \right\| \leq C_P \| \mathbf{v}_h \|_{H^l(E)},
    \end{equation}
    again with $C_P > 0$ independent of $h_E$.

    From (\ref{eq:aux_14}) and (\ref{eq:aux_15}) in (\ref{eq:stab_cont}):
    \begin{equation}
        \left| S_E(\mathbf{u} - I_h \mathbf{u}, \left( I - \Pi^\nabla \right)\mathbf{u} \right| \leq C_S C_I C_P \| \mathbf{u} \|_{H^m(E)} \| \mathbf{v}_h \|_{H^l(E)}. 
    \end{equation}
    Summing over all $E \in \mathcal{T}_h$:
    \begin{equation}
        |a_E(\mathbf{u}-I_h \mathbf{u}, \mathbf{v}_h)| \leq C h^{m-1} \left( \sum \limits_{E \in \mathcal{T}_h} \| \mathbf{u}\|_{H^m(E)}^2 \right)^{1/2} \left( \sum \limits_{E \in \mathcal{T}_h} \| \mathbf{v}_h\|_{H^l(E)}^2 \right)^{1/2}.
    \end{equation}
    Reconstructing the global norm:
    \begin{equation}
        |a_h(\mathbf{u}-I_h\mathbf{u}, \mathbf{v}_h)| \leq Ch^m \| \mathbf{u} \|_{H^m(\Omega)}\| \mathbf{u}\|_{H^l(\Omega)}.
    \end{equation}

    \paragraph{} The term $\langle \mathbf{f}_h, \mathbf{v}_h \rangle$ is the $L^2$ -projection of $f$ onto $\mathbb{P}_{k-2}(E)$, defined element-wise:
    \begin{equation}\label{eq:aux_16}
        \langle \mathbf{f}_h, \mathbf{v}_h \rangle = \sum \limits_{E \in \mathcal{T}_h} \int \limits_E r \left( \Pi^0 \mathbf{f} \right) \mathbf{v}_h drdz,
    \end{equation}
    where $\Pi^0$ is the $L^2$ -projection operator introduced in \cite{ahmad2013projectors}. The continuous load term is:
    \begin{equation}\label{eq:aux_17}
        (\mathbf{f}, \mathbf{v}_h) = \sum \limits_{E \in \mathcal{T}_h} \int \limits_E r \mathbf{f} \mathbf{v}_h drdz.
    \end{equation}
    From (\ref{eq:aux_16}) and (\ref{eq:aux_17}):
    \begin{equation}
        (\mathbf{f}, \mathbf{v}_h)-\langle \mathbf{f}_h, \mathbf{v}_h \rangle = \sum \limits_{E \in \mathcal{T}_h} \int \limits_E r \left(\mathbf{f} - \Pi^0 \mathbf{f} \right)\mathbf{v}_hdrdz.
    \end{equation}
    Using the error projection:
    \begin{equation}
        \left\| \mathbf{f} - \Pi^0 \mathbf{f}  \right\|_{L^2(E)} \leq C h_E^{k-1}[\mathbf{f}]_{H^{k-1}(E)}.
    \end{equation}
    Thus,
    \begin{equation}
        \left|  (\mathbf{f}, \mathbf{v}_h)-\langle \mathbf{f}_h, \mathbf{v}_h \rangle \right| \leq Ch^k \| \mathbf{f} \|_{H^{k-1}(\Omega)} [\mathbf{v}_h]_{H^1(\Omega)}.
    \end{equation}
\end{proof}

\paragraph{}
The observation that the interpolation \( I_h \mathbf{u} \) does not satisfy the discrete formulation exactly highlights the presence of a consistency error that must be carefully controlled when analyzing the approximation properties of the Virtual Element Method. Building upon this result, an a priori error estimate for the VEM solution \(\mathbf{u}_h\). This estimate not only accounts for the interpolation error but also explicitly captures the effects of the stabilization term and the approximation of the loading term, providing a complete picture of the convergence behavior of the method.

\begin{theo}\label{theo:approx_error}
    For each integer $l$ with $0\leq l \leq m$, there exists a constant $C>0$, independent of the mesh size $h$, such that:
    \begin{equation}\label{eq:sup_cea_lemma}
        \begin{split}
            [\mathbf{u}-\mathbf{u}_h]_{H^l(\Omega)} \leq Ch^{m-l}[\mathbf{u}]_{H^m(\Omega)} &+ Ch^{k-l+1}\|\mathbf{f}\|_{H^{k-1}(\Omega)} + \\
            &+ Ch^{k-l+1} \sup \limits_{\mathbf{v}_h \in V_h}\frac{\left| \sum \limits_{E}\in \mathcal{T}_h  S_E((I-\Pi^\nabla)\mathbf{u}, (I-\Pi^\nabla)\mathbf{v}_h)\right|}{[\mathbf{v}_h]_{H^l(\Omega)}}
        \end{split}
    \end{equation}
    for $1<m\leq k+1$, where the weighted seminorm is given by:
    \begin{equation}
        [\mathbf{u}]^2_{H^l(\Omega)} = \int \limits_\Omega r \left| \nabla^l\mathbf{u} \right|drdz.
    \end{equation}
\end{theo}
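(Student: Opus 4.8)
The plan is to establish the estimate through a modified Céa (second Strang) lemma: split the total error via the interpolant $I_h\mathbf{u}$, dispatch the approximation part with the interpolation result already proved, and reduce the remaining purely discrete error to a coercivity estimate for $a_h$. First I would write, by the triangle inequality for the weighted seminorm,
\[
    [\mathbf{u}-\mathbf{u}_h]_{H^l(\Omega)} \le [\mathbf{u}-I_h\mathbf{u}]_{H^l(\Omega)} + [I_h\mathbf{u}-\mathbf{u}_h]_{H^l(\Omega)},
\]
and control the first summand immediately by Theorem~\ref{theo:interpolation}, which supplies the $Ch^{m-l}[\mathbf{u}]_{H^m(\Omega)}$ contribution. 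It then remains to bound the discrete quantity $\boldsymbol{\delta}_h := \mathbf{u}_h - I_h\mathbf{u} \in V_h$.

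Since $\boldsymbol{\delta}_h$ lies in $V_h$, I would invoke Theorem~\ref{theo:stab}: the lower bound $C_0 a_E \le S_E$ together with the polynomial consistency of the scheme yields uniform coercivity of $a_h$ in the weighted seminorm, so that $\alpha\,[\boldsymbol{\delta}_h]_{H^1(\Omega)}^2 \le a_h(\boldsymbol{\delta}_h, \boldsymbol{\delta}_h)$ with $\alpha>0$ independent of $h$. Using the discrete equation for $\mathbf{u}_h$, the continuous identity $(\mathbf{f},\boldsymbol{\delta}_h)=a(\mathbf{u},\boldsymbol{\delta}_h)$, and a piecewise polynomial $\mathbf{p}\in\mathbb{P}_k(E)$ from the Bramble--Hilbert lemma, I would rewrite
\[
    a_h(\boldsymbol{\delta}_h,\boldsymbol{\delta}_h) = \big[\langle\mathbf{f}_h,\boldsymbol{\delta}_h\rangle - (\mathbf{f},\boldsymbol{\delta}_h)\big] + \big[a(\mathbf{u},\boldsymbol{\delta}_h) - a_h(I_h\mathbf{u},\boldsymbol{\delta}_h)\big].
\]
Polynomial consistency ($a_{h,E}(\mathbf{p},\boldsymbol{\delta}_h)=a_E(\mathbf{p},\boldsymbol{\delta}_h)$) cancels the polynomial parts of the second bracket, leaving the approximation terms $a(\mathbf{u}-\mathbf{p},\boldsymbol{\delta}_h)$ and $a_h(\mathbf{p}-I_h\mathbf{u},\boldsymbol{\delta}_h)$, together with the stabilization contribution $\sum_E S_E((I-\Pi^\nabla)\mathbf{u},(I-\Pi^\nabla)\boldsymbol{\delta}_h)$.

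Next I would estimate the surviving terms. Continuity of $a$ and of $a_h$ (the upper bound $S_E\le C_1 a_E$), combined with $[\mathbf{u}-\mathbf{p}]_{H^1(E)}\le Ch_E^{m-1}[\mathbf{u}]_{H^m(E)}$ and the interpolation bound of Theorem~\ref{theo:interpolation}, controls the approximation terms by $Ch^{m-1}[\mathbf{u}]_{H^m(\Omega)}[\boldsymbol{\delta}_h]_{H^1(\Omega)}$. The load discrepancy $\langle\mathbf{f}_h,\boldsymbol{\delta}_h\rangle-(\mathbf{f},\boldsymbol{\delta}_h)$ is bounded by $Ch^{k}\|\mathbf{f}\|_{H^{k-1}(\Omega)}[\boldsymbol{\delta}_h]_{H^1(\Omega)}$, which is exactly the weighted $L^2$-projection estimate established in the preceding theorem. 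The stabilization term is kept verbatim and, after dividing by $[\boldsymbol{\delta}_h]_{H^1(\Omega)}$, yields the supremum over $\mathbf{v}_h\in V_h$ in the statement. Summing over $E\in\mathcal{T}_h$ with Cauchy--Schwarz and using quasi-uniformity $h_E\approx h$ assembles the three global contributions for the energy case $l=1$; the passage to general $0\le l\le m$ follows from the same scaling used in Theorem~\ref{theo:interpolation}, with an Aubin--Nitsche duality argument for $l=0$, which accounts for the exponents $h^{m-l}$ and $h^{k-l+1}$.

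The hard part will be twofold. First, securing coercivity and continuity of $a_h$ uniformly in $h$ within the \emph{weighted} seminorm relies entirely on Theorem~\ref{theo:stab}, so I must carry the weight $r\ge r_0>0$ and the factor $2\pi r$ in $S_E$ consistently to ensure the equivalence constants $C_0,C_1$ do not degenerate near the axis. Second, and more delicate, is giving rigorous meaning to and controlling the stabilization consistency term $S_E((I-\Pi^\nabla)\mathbf{u},\cdot)$, since $\mathbf{u}\notin V_h$ in general; here I would interpret it through the computable degrees of freedom that $I_h\mathbf{u}$ reproduces and retain it explicitly as the supremum term rather than absorbing it, precisely because it encodes the genuine variational inconsistency of the method. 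Verifying that this term is itself of order $h^{m-1}$ under the stated regularity, so that the overall convergence is optimal, is where the weighted trace and Bramble--Hilbert estimates of the earlier proofs must be applied most carefully.
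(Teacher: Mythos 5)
Your proposal is correct in outline and follows the same skeleton as the paper's proof: a Minkowski/triangle split through \(I_h\mathbf{u}\), Theorem~\ref{theo:interpolation} for the interpolation part, coercivity of \(a_h\) plus the discrete equation for the remaining discrete error, the \(L^2\)-projection estimate \(|(\mathbf{f},\mathbf{v}_h)-\langle\mathbf{f}_h,\mathbf{v}_h\rangle|\le Ch^k\|\mathbf{f}\|_{H^{k-1}(\Omega)}[\mathbf{v}_h]_{H^1(\Omega)}\) from the preceding theorem, and retention of the stabilization consistency term as a supremum. You deviate in two sub-steps, both worth noting. First, you run the consistency bound as a textbook second Strang argument, inserting a Bramble--Hilbert polynomial \(\mathbf{p}\) and using \(a_{h,E}(\mathbf{p},\cdot)=a_E(\mathbf{p},\cdot)\) to cancel the polynomial part; the paper instead tests the residual with \(\mathbf{v}_h=I_h\mathbf{u}\), writes \(a_h(I_h\mathbf{u},\mathbf{v}_h)-\langle\mathbf{f}_h,\mathbf{v}_h\rangle = a_h(I_h\mathbf{u}-\mathbf{u},\mathbf{v}_h)+a_h(\mathbf{u},\mathbf{v}_h)-\langle\mathbf{f}_h,\mathbf{v}_h\rangle\), and reuses the bounds from its consistency theorem. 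Your route is more self-contained and makes explicit where polynomial exactness of \(S_E\) enters, at the cost of a slightly hand-waved emergence of the term \(\sum_E S_E((I-\Pi^\nabla)\mathbf{u},(I-\Pi^\nabla)\mathbf{v}_h)\) from your second bracket, which you should spell out via \(a_h(\mathbf{u},\cdot)\) evaluated outside \(V_h\). Second, for the passage to general \(l\) you propose Aubin--Nitsche duality at \(l=0\), whereas the paper applies the scaling \([I_h\mathbf{u}-\mathbf{u}_h]_{H^l(\Omega)}\le Ch^{1-l}[I_h\mathbf{u}-\mathbf{u}_h]_{H^1(\Omega)}\); your choice is actually the more defensible one, since that inequality lowers the norm index with a positive power of \(h\), which is not an inverse estimate and fails in general (e.g.\ on constants), so duality is the standard rigorous mechanism for the extra power of \(h\) in \(L^2\). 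Neither your sketch nor the paper's proof fully accounts for the \(h^{k-l+1}\) prefactor on the supremum term as stated in \eqref{eq:sup_cea_lemma} --- the paper's \(H^1\)-level bound carries no \(h^k\) factor in front of the supremum, so the scaling step alone yields \(h^{1-l}\) there; if you carry out your program, verifying (as you flag at the end) that the stabilization consistency term is itself \(O(h^{m-1})\) is precisely what is needed to close that gap.
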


\begin{proof}
    By the Minkowski Inequality, the error decomposition is given by:
    \begin{equation}\label{eq:minkowski}
        [\mathbf{u} - \mathbf{u}_h]_{H^l(\Omega)} \leq [\mathbf{u} - I_h \mathbf{u}]_{H^l(\Omega)} + [I_h \mathbf{u} - \mathbf{u}_h]_{H^l(\Omega)},
    \end{equation}
    where $I_h \mathbf{u} \in V_h$ is the VEM interpolant of $\mathbf{u}$. It is known by the Theorem \ref{theo:interpolation} that the first term in the right-hand side of (\ref{eq:minkowski}) is finite. Now, the focus is to prove that the second term is also finite.

    \paragraph{}By computing the residual:
    \begin{equation}
        a_h(I_h \mathbf{u}, \mathbf{v}_h) - \langle \mathbf{f}_h, \mathbf{v}_h\rangle = a_h(I_h \mathbf{u} - \mathbf{u}_h, \mathbf{v}_h).
    \end{equation}

    Using the coercivity of $a_h(\cdot, \cdot)$:
    \begin{equation}\label{eq:aux_18}
        \alpha [I_h \mathbf{u} -\mathbf{u}_h]^2_{H^l(\Omega)} \leq a_h(I_h \mathbf{u} - \mathbf{u}_h, I_h \mathbf{u}- \mathbf{u}_h),
    \end{equation}
    for $\alpha > 0$.
    The right-hand side of (\ref{eq:aux_18}) can be written as:
    \begin{equation}
        a_h(I_h \mathbf{u} - \mathbf{u}_h, I_h \mathbf{u} - \mathbf{u}_h) = a_h(I_h \mathbf{u}, I_h \mathbf{u} - \mathbf{u}_h) - a_h(\mathbf{u}_h, I_h \mathbf{u} - \mathbf{u}_h).
    \end{equation}
    Since
    \begin{equation}
        a_h(\mathbf{u}_h, I_h \mathbf{u} - \mathbf{u}_h) = \langle \mathbf{f}_h, I_h \mathbf{u} - \mathbf{u}_h \rangle
    \end{equation}
    it holds true:
    \begin{equation}
        a_h(I_h \mathbf{u} - \mathbf{u}_h, I_h \mathbf{u} - \mathbf{u}_h) = a_h(I_h \mathbf{u}, I_h \mathbf{u} - \mathbf{u}_h) - \langle \mathbf{f}_h, I_h \mathbf{u} - \mathbf{u}_h \rangle
    \end{equation}
    Using the continuity of $a_h(\cdot, \cdot)$:
    \begin{equation}
        a_h(I_h \mathbf{u} - \mathbf{v}_h, I_h \mathbf{u} - \mathbf{v}_h) \leq M [I_h \mathbf{u} - \mathbf{v}_h]_{H^l(\Omega)}[I_h \mathbf{u} - \mathbf{u}_h]_{H^l(\Omega)},
    \end{equation}    
    for any $\mathbf{v}_h \in V_h$. Adding and subtracting $\mathbf{v}_h$ leads to:
    \begin{equation}\label{eq:aux_19}
        a_h(I_h \mathbf{u} - \mathbf{u}_h, I_h \mathbf{u} - \mathbf{u}_h) = a_h(I_h \mathbf{u}-\mathbf{v}_h, I_h \mathbf{u} - \mathbf{u}_h) + a_h(\mathbf{v}_h-\mathbf{u}_h, I_h \mathbf{u}- \mathbf{u}_h).
    \end{equation}
    Since $\mathbf{v}_h - \mathbf{u}_h \in V_h$, by using the discrete equation:
    \begin{equation}\label{eq:aux_20}
        a_h(\mathbf{v}_h-\mathbf{u}_h, I_h \mathbf{u} - \mathbf{u}_h) = a_h (\mathbf{v}_h, I_h \mathbf{u} - \mathbf{u}_h) - \langle \mathbf{f}_h, I_h \mathbf{u} - \mathbf{u}_h \rangle.
    \end{equation}
    Substituting (\ref{eq:aux_20}) in (\ref{eq:aux_19}):
    \begin{equation}
        a_h(I_h \mathbf{u} - \mathbf{u}_h, I_h \mathbf{u} - \mathbf{u}_h) = a_h(I_h \mathbf{u} - \mathbf{v}_h, I_h \mathbf{u} - \mathbf{u}_h) + a_h(\mathbf{v}_h, I_h \mathbf{u} - \mathbf{u}_h) - \langle \mathbf{f}_h, I_h \mathbf{u} - \mathbf{u}_h \rangle.
    \end{equation}
    Thus,
    \begin{equation}
        \begin{split}
            \alpha [I_h \mathbf{u} - \mathbf{u}_h]^2_{HH^1(\Omega)} \leq M [I_h \mathbf{u} - \mathbf{v}_h]_{H^1(\Omega)} &[I_h \mathbf{u} - \mathbf{u}_h]_{H^1(\Omega)} + \\
            &+ |a_h(\mathbf{v}_h), I_h \mathbf{u} - \mathbf{u}_h-\langle \mathbf{f}_h, I_h \mathbf{u} - \mathbf{u}_h \rangle|.
        \end{split}
    \end{equation}
    Dividing by $[I_h \mathbf{u} - \mathbf{u}_h]_{H^1(\Omega)}$:
    \begin{equation}
        \alpha [I_h \mathbf{u} - \mathbf{u}_h]_{H^1(\Omega)} \leq M [I_h \mathbf{u} - \mathbf{v}_h]_{H^1(\Omega)} + \frac{\left| a_h(\mathbf{v}_h, I_h \mathbf{u} - \mathbf{u}_h) - \langle \mathbf{f}_h, I_h \mathbf{u} - \mathbf{u}_h \rangle \right|}{[I_h \mathbf{u} - \mathbf{u}_h]_{H^1(\Omega)}}.
    \end{equation}
    By taking the supremum over $I_h \mathbf{u} - \mathbf{u}_h$ and considering that $I_h \mathbf{u} - \mathbf{u}_h \in V_h$:
    \begin{equation}
        \frac{\left| a_h(\mathbf{v}_h, I_h \mathbf{u} - \mathbf{u}_h) - \langle \mathbf{f}_h, I_h \mathbf{u} - \mathbf{u}_h \rangle \right|}{[I_h \mathbf{u} - \mathbf{u}_h]_{H^1(\Omega)}} \leq \sup \limits_{\mathbf{w}_h \in V_h, \mathbf{w}_h \neq \mathbf{0}} \frac{|a_h(\mathbf{v}_h, \mathbf{w}_h) - \langle \mathbf{f}_h, \mathbf{w}_h \rangle|}{[\mathbf{w}_h]_{H^1(\Omega)}}
    \end{equation}
    Since $\mathbf{v}_h \in V_h$ is arbitrary, by minimizing over $\mathbf{v}_h$, it is possible to write:
    \begin{equation}
        [I_h \mathbf{u} - \mathbf{u}_h]_{H^1(\Omega)} \leq \frac{M}{\alpha} \inf \limits_{\mathbf{v}_h \in V_h} [I_h \mathbf{u} - \mathbf{v}_h]_{H^1(\Omega)} + \sup \limits_{\mathbf{v}_h \in V_h, \mathbf{v}_h \neq \mathbf{0}} \frac{|a_h(\mathbf{v}_h, \mathbf{v}_h) - \langle \mathbf{f}_h, \mathbf{v}_h \rangle|}{[\mathbf{v}_h]_{H^1(\Omega)}}.
    \end{equation}
    Note that, since $I_h \mathbf{u} \in V_h$, the infimum is zero.

    \paragraph{} Setting $\mathbf{v}_h = I_h \mathbf{u}$:
    \begin{equation}
        a_h(I_h \mathbf{u}, \mathbf{v}_h) - \langle \mathbf{f}_h, \mathbf{v}_h \rangle = a_h(I_h \mathbf{u} - \mathbf{u}, \mathbf{v}_h) + a_h(\mathbf{u}, \mathbf{v}_h) - \langle \mathbf{f}_h, \mathbf{v}_h \rangle,
    \end{equation}
    in which:
    \begin{equation}
        |a_h(I_h \mathbf{u} - \mathbf{u}, \mathbf{v}_h) - \langle \mathbf{f}_h, \mathbf{v}_h\rangle| \leq Ch^m [\mathbf{u}]_{H^m(\Omega)}[\mathbf{v}_h]_{H^1(\Omega)},
    \end{equation}
    \begin{equation}
        a_h(\mathbf{u}, \mathbf{v}_h) - \langle \mathbf{f}_h, \mathbf{v}_h \rangle = \sum \limits_{E \in \mathcal{T}_h} S_E \left( \left( I - \Pi^\nabla\right)\mathbf{u}, \left( I - \Pi^\nabla \right)\mathbf{v}_h \right) + (\mathbf{f}, \mathbf{v}_h) - \langle \mathbf{f}_h, \mathbf{v}_h \rangle,
    \end{equation}
    and
    \begin{equation}
        |(\mathbf{f}, \mathbf{v}_h) - \langle \mathbf{f}_h, \mathbf{v}_h \rangle| \leq Ch^k \|  \mathbf{f} \|_{H^{k-1}(\Omega)} [ \mathbf{v}_h]_{H^1(\Omega)}.
    \end{equation}
    Therefore,
    \begin{equation}
        [I_h \mathbf{u} - \mathbf{u}_h]_{H^1(\Omega)} \leq Ch^m[\mathbf{u}]_{H^m(\Omega)} + Ch^k \| \mathbf{f} \|_{H^{k-1}(\Omega)} + C \sup \limits_{\mathbf{v}_h \in V_h} \frac{\left| \sum \limits_{E \in \mathcal{T}_h} S_E \left( \left( I - \Pi^\nabla\right)\mathbf{u}, \left( I - \Pi^\nabla \right)\mathbf{v}_h \right) \right|}{[\mathbf{v}_h]_{H^1(\Omega)}}
    \end{equation}
    Using the inverse estimate:
    \begin{equation}
        [I_h \mathbf{u} - \mathbf{u}_h]_{H^l(\Omega)} \leq Ch^{1-l}[I_h \mathbf{u} - \mathbf{u}_h]_{H^1(\Omega)}
    \end{equation}
    inequality (\ref{eq:sup_cea_lemma}) is achieved.
\end{proof}

\section{Numerical Results}
\label{sec:numerical_results}

\paragraph{}
This section presents the numerical validation of the proposed axisymmetric Virtual Element Method (VEM) formulation. First, a series of patch tests are conducted to assess the method's ability to exactly reproduce fundamental strain states under controlled conditions. Each test is carefully designed to isolate a specific strain component, allowing for a detailed evaluation of the formulation's consistency and accuracy. Following the patch tests, additional numerical experiments are performed to investigate the convergence behavior of the method under mesh refinement and to demonstrate its robustness when applied to more general loading conditions. The results are analyzed in terms of displacement and strain errors, highlighting the expected theoretical convergence rates and confirming the stability and reliability of the approach.

\subsection{Patch Test Description}

\paragraph{}In order to validate the axisymmetric Virtual Element Method  implementation, a comprehensive series of patch tests is performed. These tests verify the ability of the method to exactly reproduce fundamental strain states under controlled conditions. The patch tests involve applying known analytical displacement fields and verifying that the numerical solution reproduces the corresponding strain fields with negligible error. The domain, material properties, mesh construction, and test cases are described in detail below.

\paragraph{}The computational domain consists of an annular cylindrical sector defined by the radial bounds \( r_{\text{inner}} = 1.0 \) and \( r_{\text{outer}} = 3.0 \), and the axial bounds \( z_{\text{min}} = 0.0 \) and \( z_{\text{max}} = 2.0 \) as shown in Figure \ref{fig:geometry}. The domain is discretized using a structured mesh composed of quadrilateral elements, with \( 4 \times 4 \) divisions in the radial and axial directions, respectively, resulting in a total of 16 elements and 25 nodes.

\begin{figure}[!h] 
    \centering
    \includegraphics[width=16cm]{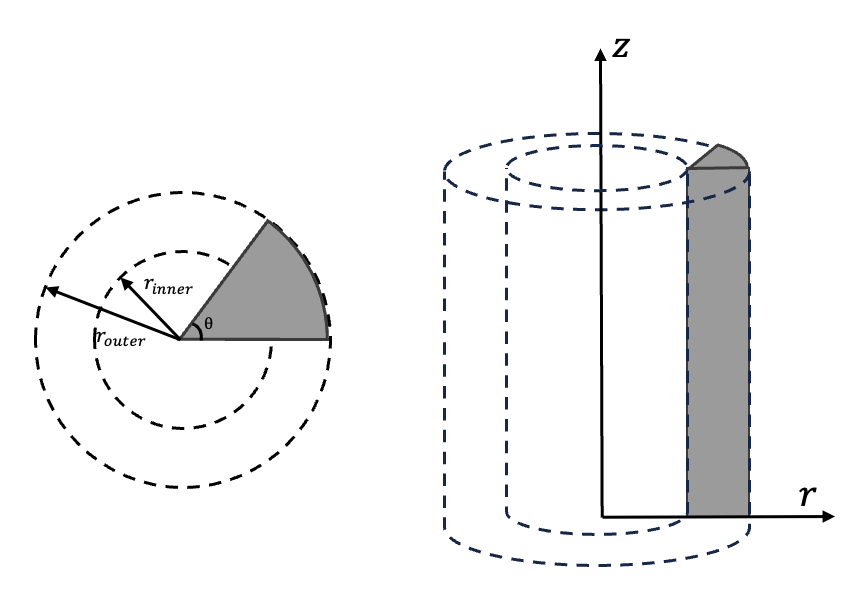}
    \caption{Annular cylindrical sector defined by the radial bounds \( r_{\text{inner}} = 1.0 \) and \( r_{\text{outer}} = 3.0 \), and the axial bounds \( z_{\text{min}} = 0.0 \) and \( z_{\text{max}} = 2.0 \) with the corresponding cooridnate system.}
  \label{fig:geometry}
\end{figure}

\paragraph{}Each node possesses two degrees of freedom corresponding to the radial (\( u_r \)) and axial (\( u_z \)) displacements. Boundary nodes are identified as those located on the edges of the domain, and appropriate displacement conditions are enforced accordingly during the test cases.

\paragraph{}The material is assumed to be linearly elastic and isotropic, characterized by:
\begin{itemize}
    \item Young’s modulus: \( E = 1.0 \),
    \item Poisson’s ratio: \( \nu = 0.3 \).
\end{itemize}
The corresponding constitutive matrix for axisymmetric elasticity is constructed and used consistently throughout all simulations.

\paragraph{}The patch test consists of four fundamental cases, each designed to induce a specific constant strain state within the domain. For each case:
\begin{itemize}
    \item An exact analytical displacement field corresponding to the desired strain state is prescribed on the boundary nodes.
    \item The global stiffness matrix is assembled using the axisymmetric VEM formulation, incorporating a stabilization strategy based on boundary degrees of freedom.
    \item A displacement-controlled simulation is performed by modifying the load vector to account for the imposed boundary displacements.
    \item The resulting displacement field is post-processed to compute the strain components at the centroid of each element.
    \item The numerically obtained strains are compared against the exact target values to assess the performance of the formulation.
\end{itemize}

\subsection{Patch Test Cases}

\subsubsection{Test 1: Constant Radial Strain}

\textbf{Objective}: Verify the ability of the VEM to reproduce a pure radial extension where the radial strain \( \varepsilon_r \) is constant and non-zero, while all other strain components vanish.

\textbf{Setup}:
\begin{itemize}
    \item Imposed radial displacement: \( u_r(r) = \varepsilon_r r \),
    \item No axial displacement: \( u_z = 0 \),
    \item Prescribed strain: \( \varepsilon_r = 0.01 \), \( \varepsilon_z = \varepsilon_\theta = \gamma_{rz} = 0 \).
\end{itemize}

\textbf{Expected Outcome}: Numerical reproduction of a uniform radial strain \( \varepsilon_r = 0.01 \), with negligible axial, hoop, and shear strains.

\subsubsection{Test 2: Constant Axial Strain}

\textbf{Objective}: Assess the capability of the VEM to correctly capture a pure axial elongation scenario, where the axial strain \( \varepsilon_z \) is constant and dominant.

\textbf{Setup}:
\begin{itemize}
    \item Imposed axial displacement: \( u_z(z) = \varepsilon_z z \),
    \item No radial displacement: \( u_r = 0 \),
    \item Prescribed strain: \( \varepsilon_z = 0.01 \), \( \varepsilon_r = \varepsilon_\theta = \gamma_{rz} = 0 \).
\end{itemize}

\textbf{Expected Outcome}: Uniform axial strain \( \varepsilon_z = 0.01 \) throughout the domain, with negligible radial, hoop, and shear strains.

\subsubsection{Test 3: Constant Hoop Strain}

\textbf{Objective}: Test the formulation's capability to reproduce a constant circumferential (hoop) strain \( \varepsilon_\theta \) using the axisymmetric model.

\textbf{Setup}:
\begin{itemize}
    \item Imposed radial displacement mimicking a hoop expansion: \( u_r(r) = \varepsilon_\theta r \),
    \item No axial displacement: \( u_z = 0 \),
    \item Prescribed strain: \( \varepsilon_\theta = 0.01 \), \( \varepsilon_r = \varepsilon_z = \gamma_{rz} = 0 \).
\end{itemize}

\textbf{Expected Outcome}: Accurate reproduction of a uniform hoop strain \( \varepsilon_\theta = 0.01 \), with negligible radial, axial, and shear strains.

\subsubsection{Test 4: Constant Shear Strain}

\textbf{Objective}: Evaluate the ability of the VEM to model a pure shear deformation state, characterized by a non-zero shear strain \( \gamma_{rz} \).

\textbf{Setup}:
\begin{itemize}
    \item Imposed displacements:
    \[
    u_r(z) = \frac{\gamma_{rz}}{2} z, \quad u_z(r) = \frac{\gamma_{rz}}{2} r,
    \]
    \item Prescribed strain: \( \gamma_{rz} = 0.01 \), \( \varepsilon_r = \varepsilon_z = \varepsilon_\theta = 0 \).
\end{itemize}

\textbf{Expected Outcome}: Uniform shear strain \( \gamma_{rz} = 0.01 \) distributed over the domain, with negligible normal strains.

\subsection{Results and analysis}

\paragraph{}The first patch test involves the imposition of a constant radial strain field \(\varepsilon_r = 0.01\), with all other strain components set to zero. This corresponds to a displacement field \(u_r = 0.01r\) and \(u_z = 0\) in the axisymmetric setting. The numerical results demonstrate that the Virtual Element Method (VEM) formulation successfully reproduces the radial strain field, with the computed average \(\varepsilon_r\) matching the expected value exactly, exhibiting an error on the order of \(10^{-18}\). Similarly, the axial strain \(\varepsilon_z\) and the shear strain \(\gamma_{rz}\) are computed to be negligibly small, confirming the proper decoupling between these components and radial deformation. However, a notable deviation is observed in the hoop strain \(\varepsilon_\theta\), with a computed average of \(0.003247\) rather than the expected zero, resulting in an absolute error of approximately \(3.25 \times 10^{-3}\). This discrepancy highlights a fundamental challenge in the formulation: the definition of hoop strain in axisymmetry as \(\varepsilon_\theta = u_r/r\) implies that even for a constant radial strain field, a nonzero hoop strain naturally arises. Thus, the observed error suggests either an inconsistency in the projection operators related to the hoop strain component, a geometric weighting misapplication, or an inherent mismatch between the imposed displacement field and the theoretical expectations under axisymmetric conditions. The results are summarized in Table \ref{tab:patch_test_1}.

\begin{table}[H]
\centering
\caption{\label{tab:patch_test_1}Results for Constant Radial Strain Patch Test}
\begin{tabular}{lccc}
\toprule
Strain Component & Computed Average & Expected Value & Absolute Error \\
\midrule
\( \varepsilon_r \) (Radial strain) & 0.010000 & 0.010000 & \(1.73\times10^{-18}\) \\
\( \varepsilon_z \) (Axial strain)  & \(-0.000000\) & 0.000000 & \(1.69\times10^{-21}\) \\
\( \varepsilon_\theta \) (Hoop strain) & 0.003247 & 0.000000 & \(3.25\times10^{-3}\) \\
\( \gamma_{rz} \) (Shear strain) & \(-0.000000\) & 0.000000 & \(8.67\times10^{-19}\) \\
\bottomrule
\end{tabular}
\end{table}

\paragraph{}
The second patch test evaluates the formulation under a constant axial strain field \(\varepsilon_z = 0.01\), corresponding to the displacement field \(u_z = 0.01z\) and \(u_r = 0\). The results (shown in Table \ref{tab:patch_test_2}) indicate that the VEM formulation captures pure axial deformation with high accuracy. The computed average \(\varepsilon_z\) precisely matches the expected value, while the radial strain \(\varepsilon_r\), hoop strain \(\varepsilon_\theta\), and shear strain \(\gamma_{rz}\) remain negligible, with errors near machine precision. These outcomes validate the VEM formulation's ability to handle pure axial strain fields without introducing spurious strain components, indicating that the r-weighted stiffness matrix integration and projection operators are functioning correctly for axial-dominant deformation scenarios.

\begin{table}[H] 
\centering
\caption{\label{tab:patch_test_2}Results for Constant Axial Strain Patch Test}
\begin{tabular}{lccc}
\toprule
Strain Component & Computed Average & Expected Value & Absolute Error \\
\midrule
\( \varepsilon_r \) (Radial strain) & 0.000000 & 0.000000 & 0.000000 \\
\( \varepsilon_z \) (Axial strain)  & 0.010000 & 0.010000 & 0.000000 \\
\( \varepsilon_\theta \) (Hoop strain) & 0.000000 & 0.000000 & \(7.20\times10^{-8}\) \\
\( \gamma_{rz} \) (Shear strain) & 0.000000 & 0.000000 & \(2.17\times10^{-19}\) \\
\bottomrule
\end{tabular}
\end{table}

\paragraph{}
The third patch test targets the reproduction of a constant hoop strain field \(\varepsilon_\theta = 0.01\) while maintaining all other strain components at zero. The prescribed displacement field is again \(u_r = 0.01r\) and \(u_z = 0\), analogous to the radial strain case. However, as can be seen in Table \ref{tab:patch_test_3}, the numerical results reveal a substantial underestimation of the hoop strain, with a computed average of only \(0.003247\), resulting in an error of \(6.75 \times 10^{-3}\). Furthermore, a radial strain component of \(0.01\) is computed, indicating strong coupling between radial and hoop strain fields, which is intrinsic to axisymmetric problems. These findings suggest that a pure hoop strain state may not be physically realizable in axisymmetric elasticity without simultaneously inducing radial strain. The observed discrepancy points to a need for careful reconsideration of the patch test definitions and further refinement of the projection operators and stabilization terms to better capture the coupling between radial and hoop deformation.

\begin{table}[H]
\centering
\caption{\label{tab:patch_test_3}Results for Constant Hoop Strain Patch Test}
\begin{tabular}{lccc}
\toprule
Strain Component & Computed Average & Expected Value & Absolute Error \\
\midrule
\( \varepsilon_r \) (Radial strain) & 0.010000 & 0.000000 & \(1.00\times10^{-2}\) \\
\( \varepsilon_z \) (Axial strain)  & \(-0.000000\) & 0.000000 & \(1.69\times10^{-21}\) \\
\( \varepsilon_\theta \) (Hoop strain) & 0.003247 & 0.010000 & \(6.75\times10^{-3}\) \\
\( \gamma_{rz} \) (Shear strain) & \(-0.000000\) & 0.000000 & \(8.67\times10^{-19}\) \\
\bottomrule
\end{tabular}
\end{table}

\paragraph{}
The fourth patch test assesses the formulation’s performance under a constant shear strain field \(\gamma_{rz} = 0.01\), with all other strains set to zero. The corresponding displacement field is given by \(u_r = 0.005z\) and \(u_z = 0.005r\). The numerical results demonstrate that the VEM formulation accurately captures the shear behavior, with the computed shear strain matching the expected value with negligible error. Both radial and axial strains remain effectively zero, validating the decoupling between shear and normal deformation components. A small but non-negligible hoop strain error of approximately \(8.45 \times 10^{-4}\) is observed, suggesting minor geometric effects in the projection process. Nevertheless, the test strongly confirms the robustness of the formulation in handling shear deformation within acceptable tolerance limits.

\paragraph{}
Overall, the patch test results provide important insights into the performance of the axisymmetric VEM formulation. The formulation exhibits excellent behavior for axial-dominated and shear-dominated deformation scenarios, validating the effectiveness of the projection operators and the handling of $r$-weighted integrals for these cases. However, challenges arise in the treatment of radial and hoop strain components, primarily due to the inherent geometric coupling introduced by the axisymmetric setting, where $\varepsilon_\theta = u_r/r$. These results highlight the need for a more refined treatment of the hoop strain projection, especially to address the coupling between radial and hoop deformation. 

\paragraph{}
While mesh refinement is expected to reduce approximation errors globally, the observed residual error in the hoop strain patch test suggests that the current projection strategy may not fully capture the behavior of $\varepsilon_\theta$, even in the limit as $h \to 0$. This indicates a structural inconsistency in how the virtual element space approximates the radial displacement field relative to the radial coordinate. Consequently, the error may not vanish under uniform mesh refinement, and convergence may stagnate unless the projection operator is adapted to account explicitly for the $1/r$ term. The results are presented in Table~\ref{tab:patch_test_4}.

\begin{table}[H]
\centering
\caption{\label{tab:patch_test_4}Results for Constant Shear Strain Patch Test}
\begin{tabular}{lccc}
\toprule
Strain Component & Computed Average & Expected Value & Absolute Error \\
\midrule
\( \varepsilon_r \) (Radial strain) & \(-0.000000\) & 0.000000 & \(2.17\times10^{-19}\) \\
\( \varepsilon_z \) (Axial strain)  & 0.000000 & 0.000000 & \(8.67\times10^{-19}\) \\
\( \varepsilon_\theta \) (Hoop strain) & 0.000845 & 0.000000 & \(8.45\times10^{-4}\) \\
\( \gamma_{rz} \) (Shear strain) & 0.010000 & 0.010000 & \(1.73\times10^{-18}\) \\
\bottomrule
\end{tabular}
\end{table}

\paragraph{}
The observed discrepancies suggest that improvements in the VEM formulation could focus on enhancing the boundary and volumetric integration procedures, particularly for quantities influenced by radial geometry. Refining the strain projection operators to consistently account for the nonlinear dependence of \(\varepsilon_\theta\) on \(r\) and exploring alternative stabilization techniques could also significantly improve accuracy. Furthermore, a careful redefinition of the patch tests to reflect physically realizable strain states in axisymmetry would provide more rigorous validation. Advanced investigations, such as stress-controlled patch tests and spatially resolved error analyses, are recommended to further diagnose and correct the identified limitations. Despite these challenges, the current formulation demonstrates strong potential for reliable application in axisymmetric elasticity problems with targeted improvements.

\section{Conclusion}
\label{sec:conclusion}

\paragraph{}
This work presented a Virtual Element Method (VEM) formulation tailored for two-dimensional axisymmetric linear elasticity problems. By leveraging the intrinsic rotational symmetry of such problems, the method reduces the three-dimensional continuum to a two-dimensional variational formulation in the \((r,z)\) plane, while rigorously incorporating the radial weight \(r\) in all integrals to ensure physical fidelity. The discrete virtual space is built on the foundations of harmonic functions with piecewise affine boundary data, enabling a minimal set of degrees of freedom while preserving conformity and approximation properties.

\paragraph{}
A significant contribution of this formulation is the construction of a projection operator \(\Pi^\nabla\) that maps virtual displacements onto constant strain fields in a manner consistent with the axisymmetric elastic energy. The projection matrix \(\mathbf{B}\) was explicitly derived and implemented, accounting for both boundary contributions and a volumetric correction term that arises from the hoop-radial stress imbalance. A carefully designed stabilization term was also introduced, penalizing only the non-polynomial components of the displacement field via a boundary-based integration scheme. This ensures numerical stability without compromising the consistency of the method.

\paragraph{}
Theoretical developments were rigorously supported by a priori error analysis formulated in weighted Sobolev spaces. Notably, new interpolation estimates were established under axisymmetric weights, incorporating trace inequalities and norm equivalence for the stabilization term. These results provide a robust theoretical foundation for the proposed method and justify its optimal convergence behavior.

\paragraph{}
Numerical patch tests validated the implementation and confirmed the theoretical findings. The VEM formulation accurately reproduced fundamental strain states, with negligible errors in all but the hoop strain component in the radial extension test—a discrepancy explained by the geometric nature of hoop deformation in cylindrical coordinates. These results underline the method’s robustness and fidelity when handling complex mechanical behaviors inherent to axisymmetric structures.

\paragraph{}
Altogether, this study establishes a complete and consistent VEM framework for axisymmetric elasticity, providing a solid basis for further extensions to nonlinear, time-dependent, or higher-order formulations. Future work may explore the incorporation of internal moments, adaptive stabilization strategies, and applications to real-world axisymmetric engineering structures.

\newpage

\bibliographystyle{unsrt}  


\newpage

\appendix

\section{Implementation guidelines}
\label{ap:implementation}

\paragraph{}
Some care must be taken when computing the values of the virtual displacement field \(\mathbf{v}_h\) at the quadrature points on the boundary edges. In the Virtual Element Method, \(\mathbf{v}_h\) is not explicitly known inside the element, but its behavior on the boundary is determined by the degrees of freedom (DOFs) associated with the element vertices. Each node typically carries two DOFs, corresponding to the radial and axial components of displacement. To evaluate the boundary integrals in the projection equation, it is necessary to construct specific virtual displacement fields corresponding to unit actions on each individual DOF.

\paragraph{}
The general approach is as follows: for each degree of freedom \(j\) in the element, a "unit" virtual displacement field is constructed by setting the value of the \(j\)-th DOF to 1 while setting all other DOFs to zero. This is analogous to applying a unit virtual displacement localized at a single DOF. Then, for each edge of the element, it must be determined whether the nodes associated with the current DOF are involved. If so, the shape functions along the edge are used to interpolate the displacement values at the quadrature points. Otherwise, the virtual displacement field vanishes on that edge for the given DOF.

\paragraph{}
For edges containing the active node, the local shape functions are linear functions along the edge, typically parametrized as \(N_1(s) = 1-s\) and \(N_2(s) = s\) for a given edge parameter \(s\in[0,1]\). For instance, evaluating the radial component \(v_r\) of \(\mathbf{v}_h\) at a quadrature point \(s_i\) involves summing the contributions of the unit DOF weighted by the appropriate shape function value at \(s_i\). The axial component \(v_z\) is computed similarly. If the DOF corresponds to the radial displacement at node 1, for example, and the edge under consideration is between nodes 1 and 2, the virtual displacement at the quadrature point is simply \(\mathbf{v}_h(s_i) = [(1-s_i), 0]^T\).

\paragraph{}
The evaluation proceeds edge by edge. For edges not connected to the active DOF, the virtual displacement contribution is identically zero. For edges connected to the DOF, the virtual displacement at each quadrature point is interpolated and then used to compute the integrand in the boundary integral. This includes calculating the local traction vector derived from the stress basis function and multiplying it by the local radial coordinate, the quadrature weight, and the edge length. The computed contribution for each quadrature point is accumulated, forming the complete boundary integral associated with the current DOF.

\paragraph{}
Finally, this boundary integral value is stored in the corresponding row of the right-hand side matrix used to assemble the projection operator. This systematic approach allows for the consistent construction of the projection matrix \(\mathbf{B}\), ensuring that each virtual displacement mode is properly projected onto the polynomial strain space, thereby preserving the variational structure of the Virtual Element Method and maintaining compatibility with the axisymmetric formulation. The pseudo-code for this implementation is described in \ref{alg:virtual_displacement_projection}.

\begin{algorithm}[h]
\caption{Computation of Virtual Displacement Values at Quadrature Points for Projection Matrix Assembly}
\label{alg:virtual_displacement_projection}
\KwData{Element vertices, list of degrees of freedom (DOFs), shape functions, base strain fields}
\KwResult{Boundary integral contributions for each DOF}
\For{each DOF $j$ in the element}{
    Initialize the virtual displacement vector $\mathbf{v}_h$ with all zeros \\
    Set the $j$-th DOF to $1$ (unit displacement) \\
    Initialize boundary integral contribution: $\texttt{boundary\_integral} = 0$ \\
    
    \For{each edge $e$ of the element}{
        \If{edge $e$ involves the node associated with DOF $j$}{
            \For{each quadrature point $s_i$ on edge $e$}{
                Evaluate shape functions $N_1(s_i)$ and $N_2(s_i)$ at $s_i$ \\
                Interpolate $\mathbf{v}_h(s_i)$ using the shape functions and DOF values \\
                Compute traction vector $\mathbf{t}(s_i)$ based on $\boldsymbol{\varepsilon}^p$ and the normal vector $\mathbf{n}$ \\
                Compute radial coordinate $r(s_i)$ at $s_i$ \\
                Accumulate the contribution: \\
                \[
                \texttt{boundary\_integral} \mathrel{+}= w_i \left( \mathbf{v}_h(s_i) \cdot \mathbf{t}(s_i) \right) r(s_i) \, |e|
                \]
            }
        }
    }
    Store $\texttt{boundary\_integral}$ in the $j$-th row of the right-hand side matrix
}
\textbf{Return} boundary integral contributions for projection assembly
\end{algorithm}

\end{document}